\setlist[enumerate]{labelsep=*, leftmargin=1.5pc}
\setlist[enumerate]{label=\normalfont(\roman*), ref=\roman*}
\theoremstyle{plain}
\newtheorem{thm}{Theorem}[section]
\newtheorem{pro}[thm]{Proposition}
\newtheorem{lem}[thm]{Lemma}
\newtheorem{conjecture}{Conjecture}
\theoremstyle{definition}
\newtheorem{dfn}[thm]{Definition}
\newtheorem{rem}[thm]{Remark}
\newtheorem{eg}[thm]{Example}
\DeclareMathOperator{\rk}{rk}
\DeclareMathOperator{\GL}{{GL}}
\DeclareMathOperator{\Spec}{Spec}
\DeclareMathOperator{\Newt}{Newt}
\DeclareMathOperator{\Ann}{Ann}
\DeclareMathOperator{\mut}{mut}
\DeclareMathOperator{\Arr}{Arr}
\DeclareMathOperator{\Cone}{Cone}
\DeclareMathOperator{\Gr}{Gr}
\newcommand{\bmax}[1]{\operatorname{max}\mleft({#1}\mright)}
\newcommand{\cB}{\mathcal{B}}
\newcommand{\cA}{\mathcal{A}}
\newcommand{\cE}{\mathcal{E}}
\newcommand{\cC}{\mathcal{C}}
\newcommand{\QQ}{{\mathbb{Q}}}
\newcommand{\PP}{{\mathbb{P}}}
\newcommand{\ZZ}{{\mathbb{Z}}}
\newcommand{\CC}{\mathbb{C}}
\newcommand{\bs}{\mathbf{s}}
\newcommand{\proofsection}[1]{%
	\vspace{0.2em}%
	\noindent\underline{{#1}:}%
	\ }
\begin{document}
\author[T.\,Prince]{Thomas Prince}
\address{Mathematical Institute\\University of Oxford\\Woodstock Road\\Oxford\\OX2 6GG\\UK}
\email{thomas.prince@magd.ox.ac.uk}

\keywords{Mirror Symmetry, Fano manifolds, toric degenerations.}
\subjclass[2000]{14J33 (Primary), 14J45, 52B20 (Secondary)}
\title{Polygons of Finite Mutation Type}
\maketitle
\begin{abstract}
	We classify Fano polygons with finite mutation class. This classification exploits a correspondence between Fano polygons and cluster algebras, refining the notion of singularity content due to Akhtar and Kasprzyk. We also introduce examples of cluster algebras associated to Fano polytopes in dimensions greater than two.
	
\end{abstract}

\section{Introduction}
\label{sec:introduction}

The notion of combinatorial, or polytope, mutation was introduced by Akhtar--Coates--Galkin--Kasprzyk~\cite{ACGK} to describe mirror partners to Fano manifolds. Following Givental \cite{Givental:Equivariant_GW,Giv95,Giv98}, Kontsevich~\cite{K98}, and Hori--Vafa~\cite{Hori--Vafa}, the mirror partner to a Fano manifold consists of a complex manifold together with a holomorphic function, the \emph{superpotential}. If this mirror manifold contains a complex torus we can write down a collection of volume preserving birational maps of this complex torus which preserve the regularity of the superpotential. We call these rational maps (algebraic) mutations, following \cite{ACGK} and work of Galkin--Usnich~\cite{Galkin--Usnich}. Combinatorial mutation is the operation induced on the Newton polyhedra of the restriction of the superpotential to such tori.

All the polytopes we consider are \emph{Fano}, that is, polytopes which contain the origin in the interior and such that its vertices are primitive lattice vectors. In joint work \cite{KNP15} with Kasprzyk and Nill we showed that, in dimension two, the notion of polytope mutation is compatible with the construction of a quiver and cluster algebras one can associate to each Fano polygon. 

The idea of associating a polygon with a quiver -- or toric diagram -- has a reasonably long history, particularly in the physics literature. In that setting the polygon describes a toric Calabi--Yau singularity and the quiver is used to describe the matter content of a gauge theory arising on a stack of D$3$-branes probing the toric Calabi--Yau singularity, for example \cite{FHM+,HKPW,BP06,HV07,FHH01,LV98,AH97} for a selection of the literature on this subject. The construction of a quiver (and cluster algebra) from a polygon has also been used by Gross--Hacking--Keel \cite{GHK2} in the study of associated log Calabi--Yau varieties, and to study the derived category of the toric variety, or the associated local toric Calabi--Yau as pursued, for example, in \cite{BS10,H04,MR04,HP11,P12}. In each setting the basic construction is the same, and we recall the version relevant to our applications in \S\ref{sec:edge_mutations}.

Our main result, \textbf{Theorem~\ref{thm:finite_type}}, is a classification of the mutation classes of polygons which contain only finitely many polygons. This parallels a finite type result of Mandel~\cite{Mandel14}, for rank two cluster varieties. In particular we see that finite mutation classes of polygons fall into four types $A_1^n$, for $n \in \ZZ_{\geq 0}$, $A_2$, $A_3$, and $D_4$.

There is a close connection between mutation classes of Fano polygons and $\QQ$-Gorenstein deformations of the corresponding toric varieties which is described in detail in \cite{A+}. Following these ideas we predict the existence of a finite type parameter space for these deformations, together with a boundary stratification such that each zero stratum corresponds to a polygon in the given mutation class, and the $1$-strata corresponds to the mutation families constructed by Ilten~\cite{Ilten12}.

While our main result applies in dimension two, we note that polytope mutation is defined in all dimensions, and the construction of a quiver and cluster algebra we provides applies to `compatible collection' of mutations in any dimension, see Definition~\ref{def:compatible_collection}. This definition is, unfortunately, less well behaved in dimensions greater than two, but we provide an example indicating that polytope mutation can detect known examples cluster structures appearing on linear sections of Grassmannians of planes. We expect this to extend to a wide variety of other cluster structures found in Fano manifolds and their mirror manifolds.

\subsection*{Acknowledgements}
We thank Alexander Kasprzyk for his insights on polytope mutation, and our many conversations about quivers. The author is supported by a Fellowship by Examination at Magdalen College, Oxford. This work was undertaken while the author was a graduate student at Imperial College London.
\section{Quivers and cluster algebras}
\label{sec:clusters}

We devote this section to fixing the various conventions and notation, as well as recalling the basic definitions. We recall the definition of cluster algebra, and in order to address both geometric and combinatorial applications we shall adapt our treatment from the work of Fomin--Zelevinsky~\cite{FZ00}, and the work of Fock--Goncharov ~\cite{FG09} and Gross--Hacking--Keel~\cite{GHK2}. We first fix the following data:
\begin{itemize}
	\item $N$, a fixed rank $n$ lattice with skew-symmetric form  $\{-,-\}\colon N \times N \rightarrow \ZZ$.
	\item A saturated sublattice $N_{uf} \subseteq N$, the \emph{unfrozen} sublattice.
	\item An index set $I$, $|I| = \rk(N)$ together with a subset $I_{uf} \subseteq I$ such that $|I_{uf}| = \rk(N_{uf})$. For later convenience we shall define $m := |I_{uf}|$.
\end{itemize}

\begin{rem}
	The requirement that the form is integral is not necessary, but is sufficiently general for our applications and simplifies the exposition considerably.
\end{rem}

\begin{dfn}\label{def:seed}
	A \emph{(labelled) seed} is a pair $\bs = (\cE,C)$, where:
	\begin{itemize}
		\item $\cE$ is a basis of $N$ indexed by $I$, such that $\cE|_{I_{uf}}$ is a basis for $N_{uf}$.
		\item $C$ is a transcendence basis of $\mathcal{F}$, the field of rational functions in $n$ independent variables over $\QQ(x_i : i \in I \setminus I_{uf})$, referred to as a \emph{cluster}.
	\end{itemize}
\end{dfn}

\begin{rem}
	The basis $\cE$ is what the authors of \cite{FG09,GHK2} refer to as \emph{seed data}. Since we have fixed the lattice $N$ and skew-symmetric form $\{-,-\}$ the variables $x_i$ can be identified with coordinate functions on the \emph{seed torus} $T_N$.
\end{rem}

\begin{dfn}\label{def:seed_mutation}
	Given a seed $\bs = (\cE,C)$ with $\cE = \{e_1, \ldots , e_n\}$ and $C = \{x_1,\ldots,x_n\}$, the \emph{$j$th mutation} of $(\cE,C)$ is the seed $(\cE',C')$, where $\cE' = \{e_1',\ldots,e_n'\}$ and $C' = \{x'_1,\ldots,x'_n\}$ are defined by:
	\[
	e'_k = \begin{cases}
	-e_j, & \text{if $k = j$} \\
	e_k + \bmax{b_{kj},0}e_j, & \text{otherwise}
	\end{cases}
	\]
	\noindent where $b_{kl} = \{e_k,e_l\}$ and is often referred to as the \emph{exchange matrix},
	\begin{align}\label{eq:exchange_relation}
	x_k' = x_k\text{ if }k \ne j,&&
	\text{ and }&&
	x_jx'_j = \!\!\prod_{\stackrel{\scriptstyle k\text{ such that}}{b_{jk} > 0}}\!\!{x^{b_{jk}}_k} + \!\!\prod_{\stackrel{\scriptstyle l\text{ such that}}{b_{jl} < 0}}\!\!{x^{b_{lj}}_l}.
	\end{align}
\end{dfn}

\begin{dfn}
	\label{dfn:cluster_algebra}
	A \emph{cluster algebra} is the subalgebra of $\mathcal{F}$ generated by the cluster variables appearing in the union of all clusters obtained by mutation from a given seed.
\end{dfn}

Given a skew-symmetric $n \times n$ matrix $B$, we let $\cA(B)$ denote the cluster algebra associated to $B$; this is a subalgebra of $\QQ(x_1,\ldots,x_n)$.

\begin{rem}
	Definition~\ref{dfn:cluster_algebra} is really a special case of the definition of a cluster algebra, a class referred to as the \emph{skew-symmetric cluster algebras of geometric type}. In the general case the form $\{-,-\}$ need only be \emph{skew-symmetrizable}. One consequence of the skew-symmetry of the form $\{-,-\}$ is the identification of each exchange matrix with a \emph{quiver} $Q$. One may assign this quiver in the obvious way, assigning a vertex to each basis element of $N$, and $b_{ij}$ arrows $v_i \rightarrow v_j$, oriented according to the sign of $b_{ij}$. Having divided the vertex set into frozen vertices and unfrozen ones one can replace the basis $\cE$ with $Q$. There is a well-known notion of quiver mutation, going back to Fomin--Zelevinsky~\cite{FZ00}, generalising the reflection functors of Bernstein--Gelfand--Ponomarev~\cite{BGP73}. Mutating a seed in a skew-symmetric cluster algebra induces a corresponding mutation of the associated quiver.
\end{rem}

\begin{dfn}\label{def:quiver_mutation}
	Given a quiver $Q$ and a vertex $v$ of $Q$, the \emph{mutation of $Q$ at $v$} is the quiver $\mut(Q,v)$ obtained from $Q$ by:
	\begin{enumerate}
		\item\label{item:quiver_mutation_add_shortcuts}
		adding, for each subquiver $v_1 \to v \to v_2$, an arrow from $v_1$ to $v_2$;
		\item\label{item:quiver_mutation_delete_2_cycles}
		deleting a maximal set of disjoint two-cycles;
		\item\label{item:quiver_mutation_reverse_arrows}
		reversing all arrows incident to $v$.
	\end{enumerate}
	The resulting quiver is well-defined up to isomorphism, regardless of the choice of two-cycles in~\eqref{item:quiver_mutation_delete_2_cycles}.
\end{dfn}

Since we shall refer to quivers frequently we shall make the following conventions.	Given a quiver $Q$, we define

\begin{itemize}
	\item $Q_0$ to be the set of vertices of $Q$.
	\item $\Arr(v_i,v_j)$ to be the set of arrows from $v_i \in Q_0$ to $v_j \in Q_0$.
	\item $b_{ij}$ to be the cardinality of $\Arr(v_i,v_j)$, with sign indicating orientation.
\end{itemize}
We shall always assume $Q$ has no vertex-loops or 2-cycles.

Given a seed $\bs$ we shall also fix notation for the dual basis $\cE^\star$ of $M := \hom(N,\ZZ)$ and for each $i \in I$, set $v_i := \{e_i,-\} \in M$. We now define the  $\mathcal{A}$ and $\mathcal{X}$ cluster \emph{varieties} defined by Fock--Goncharov \cite{FG09}. Toward this, observe to a seed $\bs$ we can associate a pair of tori

\begin{align*}
\mathcal{X}_\bs = T_M && \mathcal{A}_\bs = T_N.
\end{align*}
The dual pair of bases for the respective lattices define identifications of these tori with split tori,
\begin{align*}
\mathcal{X}_\bs \longrightarrow \mathbb{G}^n_m, && \mathcal{A}_\bs \longrightarrow \mathbb{G}^n_m.
\end{align*}
We also associate the following birational maps to each seed,
\begin{align*}
\mu^\star_kz^n = z^n(1+z^{e_k})^{-\{n,e_k\}} && \mu^\star_kz^m = z^m(1+z^{v_k})^{\langle e_k,m \rangle}.
\end{align*}
Pulling these birational maps back along the identifications with the split torus given by the seed, the birational map $\mu_k \colon \mathcal{A}_\bs \dashrightarrow \mathcal{A}_{\mu_k(\bs)}$ is given by the exchange relation \eqref{eq:exchange_relation}. That is, this birational map is the coordinate-free expression of the exchange relation once we identify the standard coordinates on $T_N$ with the cluster variables $x_i \in C$ (including the frozen variables $x_{n+1} \cdots, x_m$). We obtain schemes $\mathcal{X}$ and $\mathcal{A}$ by gluing the seed tori $\mathcal{A}_s$ and~$\mathcal{X}_s$ along the birational maps defined by the mutations $\mu_k$. For more details and related results we refer to \cite{FG09,GHK2}.

We conclude this section by recalling the classifications of cluster algebras of \emph{finite type} and \emph{finite mutation type}.

\begin{dfn}
	A cluster algebra is said to be of \emph{finite type} if it contains finitely many clusters.
\end{dfn}

Given an undirected graph $G$ we say that a quiver $Q$ is an \emph{orientation} of $G$ if it has the same set of vertices and for each edge of $G$ there is precisely one arrow between the respective vertices. Given a simply-laced Dynkin diagram $D$ we say that $Q$ is of \emph{type $D$} if it is an orientation of the underlying graph of $D$.

\begin{thm}[\!\!\cite{FZ03}]
	\label{thm:finite_cluster_algebras}
	There is a canonical bijection between the Cartan matrices of
	finite type and cluster algebras of finite type. Under this bijection, a Cartan matrix $A$
	of finite type corresponds to the cluster algebra $\cA(B)$, where $B$ is an arbitrary skew-symmetrizable
	matrix with Cartan companion equal to $A$.
\end{thm}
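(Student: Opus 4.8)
The plan is to follow Fomin--Zelevinsky~\cite{FZ03}, reducing the statement to a purely combinatorial classification of exchange matrices and then linking that classification to the root systems of the corresponding Dynkin diagrams. The first observation is that being of finite type is a property of the exchange matrix alone: mutation of $B$ does not involve the cluster variables, and a change of coefficients does not affect whether the set of clusters is finite, so it suffices to treat the coefficient-free algebra $\cA(B)$. Following \cite{FZ03} I would encode $B$ by its \emph{diagram} --- a weighted graph, which in the skew-symmetric case is just the quiver $Q$ of $B$ --- and call $B$ (or its diagram) \emph{$2$-finite} if every matrix mutation-equivalent to $B$ has all its entries bounded; equivalently, if the mutation class of the diagram is a finite set of diagrams on the fixed vertex set. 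The goal is then the chain of equivalences
\[
\cA(B)\ \text{is of finite type}\ \iff\ B\ \text{is $2$-finite}\ \iff\ B\ \text{is mutation-equivalent to a Dynkin diagram},
\]
the last assertion meaning an orientation of a simply-laced Dynkin diagram in the skew-symmetric case, and in general a skew-symmetrizable matrix whose Cartan companion is of finite type.

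The combinatorial heart is the classification of $2$-finite diagrams, which I would carry out by induction on the rank $n$. The rank-$2$ case is the elementary inequality $b_{12}b_{21}\in\{0,1,2,3\}$, giving exactly the finite-type Cartan matrices $A_1\times A_1$, $A_2$, $B_2$, $G_2$. For higher rank one shows that $2$-finiteness is detected on small subdiagrams: a diagram fails to be $2$-finite precisely when one of its full subdiagrams is mutation-equivalent to a member of an explicit finite list of minimal "forbidden" diagrams, which turn out to be exactly the extended (affine) Dynkin diagrams $\widetilde{A}_n,\widetilde{B}_n,\ldots,\widetilde{E}_8$ together with a short list of sporadic exceptions. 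Excluding these forbidden configurations and running the induction forces the diagram into the Dynkin list; conversely one verifies directly that the mutation class of each Dynkin diagram is finite. I expect this to be the main obstacle of the whole argument: it requires a delicate case analysis of how arrows and weights propagate under mutation, together with the identification of the precise list of minimal non-$2$-finite diagrams.

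Finally I would prove the equivalence of finite type with $2$-finiteness of the diagram. One direction is immediate: if $\cA(B)$ has finitely many clusters then it has finitely many seeds, hence finitely many exchange matrices, so $B$ is $2$-finite. The converse is the genuinely hard analytic input. Here I would fix $B$ mutation-equivalent to a Dynkin diagram of type $X_n$ and build the root-theoretic model of \cite{FZ03}: by the Laurent phenomenon of \cite{FZ00}, every cluster variable is a Laurent polynomial in the initial cluster whose denominator is a monomial $x_1^{a_1}\cdots x_n^{a_n}$, and one proves --- inductively along mutation sequences, using the exchange relations --- that the exponent vector $(a_1,\ldots,a_n)$ is always the coordinate vector, in the basis of simple roots, of a root of $X_n$, and that this assignment sets up a bijection between the cluster variables and the \emph{almost positive roots} $\Phi_{\geq -1}=\Phi_{>0}\cup(-\Delta)$. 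Clusters then correspond to the maximal faces of the \emph{cluster complex}, the simplicial complex on $\Phi_{\geq -1}$ whose simplices are the subsets that are pairwise compatible with respect to the compatibility degree; since $\Phi_{\geq -1}$ is finite, so is this complex, and hence $\cA(B)$ has finitely many clusters.

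Assembling the three equivalences yields the bijection of the theorem. Its well-definedness --- that $A\mapsto\cA(B)$ does not depend on the choice of skew-symmetrizable $B$ with Cartan companion $A$ (nor, in the simply-laced case, on the chosen orientation), and that distinct finite-type Cartan matrices yield non-isomorphic cluster algebras --- then follows from the classification itself, since all such $B$ share a single (finite) diagram mutation class; in particular any two acyclic orientations of a tree, and Dynkin diagrams are trees, are mutation-equivalent.
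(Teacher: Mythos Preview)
The paper does not give its own proof of this theorem: it is stated with the citation \cite{FZ03} and immediately used as background, with no argument supplied. So there is nothing in the paper to compare your proposal against; the ``paper's proof'' is simply a reference to Fomin--Zelevinsky.

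That said, your outline is a faithful sketch of the original Fomin--Zelevinsky argument in \cite{FZ03}: the reduction to the coefficient-free case, the notion of $2$-finiteness (in \cite{FZ03} this is the condition $|b_{ij}b_{ji}|\leq 3$ for every matrix in the mutation class, which is what forces the rank-$2$ subdiagrams to be Dynkin), the classification of $2$-finite diagrams via excluded extended Dynkin subdiagrams, and the bijection between cluster variables and almost positive roots giving finiteness in the Dynkin case. One small imprecision: you phrase $2$-finiteness as ``all entries bounded'' and then as ``finite diagram mutation class''; the actual definition is the $|b_{ij}b_{ji}|\leq 3$ condition, and the equivalence with finiteness of the diagram mutation class is part of what is proved rather than the definition. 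But as an outline of how one would reproduce the cited result, your proposal is correct and matches the source the paper defers to.
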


Theorem~\ref{thm:finite_cluster_algebras} describes skew-symmetric cluster algebras with finitely many clusters. We can ask instead for the weaker condition that only finitely many \emph{quivers} appear associated to seeds of the cluster algebra. This is the notion of \emph{finite mutation type} cluster algebra, for which a  classification is also known.

\begin{thm}[{\cite[Theorem~$6.1$]{FST09}}]
	\label{thm:finite_mut_type}
	Given a quiver $Q$ with finite mutation class, its adjacency matrix $b_{ij}$ is the adjacency matrix of a triangulation of a bordered surface or is mutation equivalent to one of eleven exceptional types.
\end{thm}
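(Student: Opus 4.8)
The statement is the Felikson--Shapiro--Tumarkin classification, and in what follows it is imported wholesale from \cite{FST09}; for orientation we sketch the shape of the argument there, which builds on the block decompositions of Fomin--Shapiro--Thurston and on work of Derksen--Owen. One begins with the trivial reductions. A disconnected quiver has finite mutation class if and only if each connected component does. Every quiver on at most two vertices has finite mutation class -- mutation at either vertex merely reverses the arrows between them -- so, as those cases lie outside the dichotomy of the statement, one restricts to connected $Q$ with $|Q_0| \ge 3$. The essential preliminary is an inheritance lemma: an induced subquiver of a quiver of finite mutation class is again of finite mutation class. This is immediate from the mutation rule, since a sequence of mutations performed only at vertices lying in a subquiver $Q'$ acts on the corresponding submatrix of $b_{ij}$ exactly as that same sequence acts on $Q'$ in isolation; finiteness of the ambient mutation class therefore forces finiteness of the subquiver's.

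Next one classifies, by a finite computation, the rank-$3$ mutation classes: a $3$-vertex quiver has finite mutation class precisely when it is mutation equivalent either to an acyclic quiver with all $|b_{ij}| \le 1$ or to the Markov quiver, the oriented $3$-cycle with every multiplicity equal to $2$. In particular no $3$-vertex quiver of finite mutation class, in any representative, carries an arrow of multiplicity $\ge 3$. Combined with the inheritance lemma -- and the existence, when $|Q_0| \ge 3$, of a third vertex distinct from any given pair -- this forces every quiver in the mutation class of $Q$ to have all arrow multiplicities $\le 2$, with any double arrow sitting inside a Markov-type $3$-cycle. With the local structure pinned down this tightly one sets up the central dichotomy. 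Either $Q$ is \emph{block-decomposable} in the sense of Fomin--Shapiro--Thurston, i.e.\ glued from copies of their short list of elementary blocks, in which case it is the signed adjacency matrix of an ideal triangulation of a bordered surface, and the connected finite flip graph of that fixed finitely-marked surface shows its mutation class is finite, consisting exactly of the triangulations of that surface. Or $Q$ is not block-decomposable, and then it must be mutation equivalent to one of the eleven matrices $E_6,E_7,E_8,\widetilde E_6,\widetilde E_7,\widetilde E_8,E_6^{(1,1)},E_7^{(1,1)},E_8^{(1,1)},X_6,X_7$; one checks by explicit enumeration of eleven mutation classes that each of these does have finite mutation class and is not block-decomposable, and conversely that a non-block-decomposable quiver of finite mutation class necessarily reduces to one of them.

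I expect this last reduction to be the real obstacle: proving that the non-block-decomposable part of the classification is \emph{finite} and bounded in rank, so that what remains is a finite search. The mechanism is to show that a finite-mutation-class quiver failing to be block-decomposable must, after a suitable sequence of mutations, exhibit one of a short list of small configurations that cannot occur in a triangulation quiver, and then that the presence of such a configuration, together with the multiplicity bound from the rank-$3$ analysis, bounds $|Q_0|$ by a universal constant. Making the block-decomposition dictionary precise is itself delicate, since one bordered surface carries many ideal triangulations: one must verify that the mutation class of a block-decomposable quiver is \emph{exactly} the set of signed adjacency matrices of ideal triangulations of a single surface, neither missing triangulations -- which needs connectivity of the flip graph, including the tagged-arc subtleties around punctures -- nor picking up extraneous quivers.
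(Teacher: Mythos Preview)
Your proposal is appropriate: this theorem is not proved in the paper at all. It is stated as a citation of Felikson--Shapiro--Tumarkin \cite[Theorem~6.1]{FST09} and used as a black box in the proof of Theorem~\ref{thm:finite_type}. Your sketch of the original argument is therefore strictly more than the paper provides, and is consistent with the paper's treatment of the result as imported wholesale.
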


\begin{figure}
	\makebox[\textwidth][c]{\includegraphics{./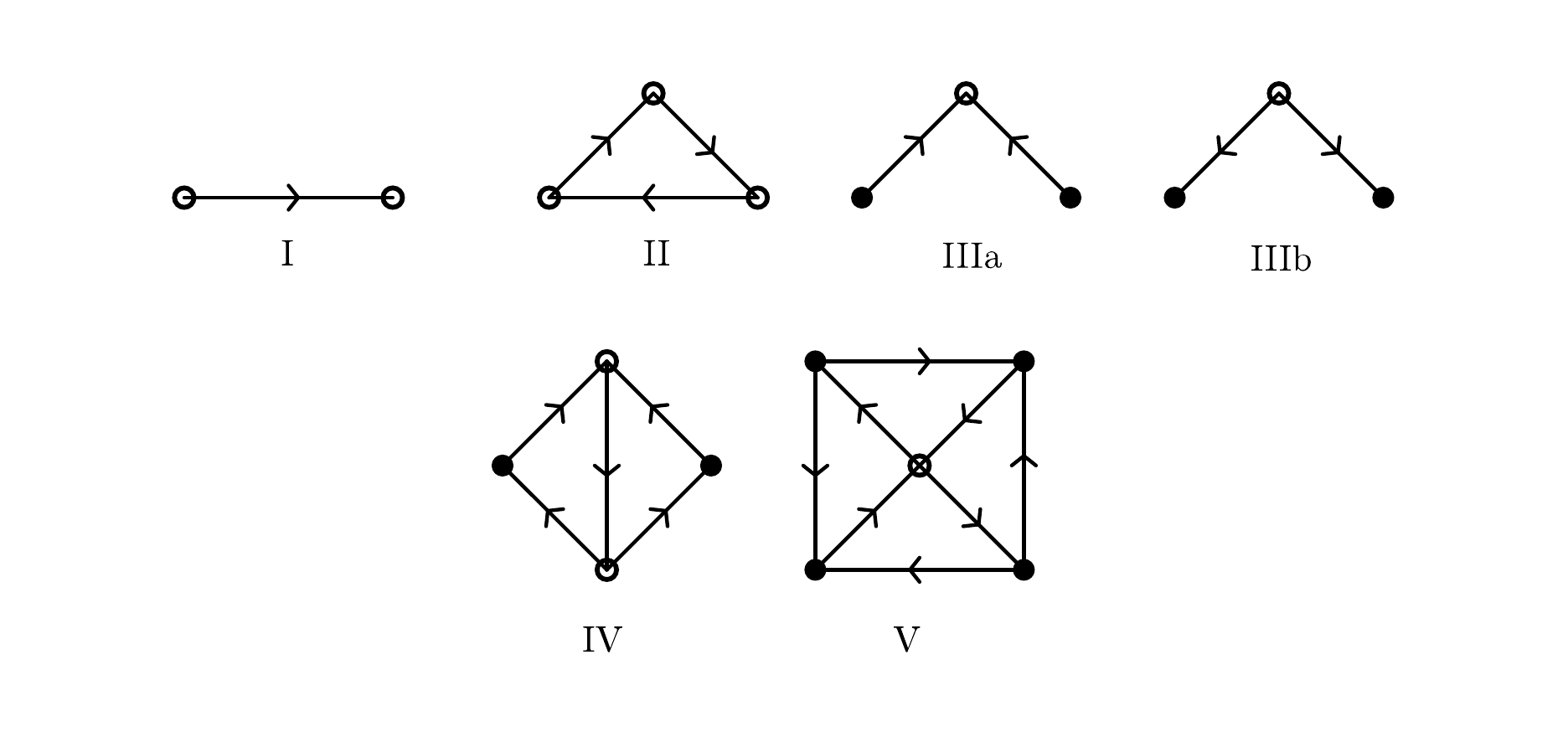}}
	\caption{the blocks of a \emph{block decomposition}}
	\label{fig:blocks}
\end{figure}

The class of quivers coming from triangulations of surfaces is well-studied and we make use of a combinatorial characterisation of this class of quivers via \emph{block decomposition}. A quiver $Q$ is said to admit a \emph{block decomposition} if it may be assembled from the $6$ \emph{blocks} shown in Figure~\ref{fig:blocks} by identifying the vertices of quivers shown with unfilled circles, the \emph{outlets}. More precisely, we choose a partial matching of the combined set of outlets such that no outlet is matched to a vertex of the same block, including itself. We form $Q$ by gluing the quiver along these vertices and cancelling any two cycles formed by this process. See \cite[Definition~$13.1$]{FST07} for further discussion and examples of this definition.

\begin{thm}[{\cite[Theorem~$13.3$]{FST07}}]
	\label{thm:blocks}
	A quiver $Q$ given by the adjacency matrix of a triangulation of a surface is mutation equivalent to a quiver which admits a \emph{block decomposition}
\end{thm}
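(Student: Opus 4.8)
The plan is to exhibit, for a conveniently chosen triangulation, an explicit block decomposition read off from the combinatorics of the triangulation, in the spirit of Fomin--Shapiro--Thurston. By hypothesis $Q$ is the adjacency matrix $B(T_0)$ of an ideal triangulation $T_0$ of some bordered surface $(S,M)$ with marked points. First I would invoke two standard facts: flipping an arc changes the adjacency quiver by a mutation at the corresponding vertex, and the flip graph of ideal triangulations of $(S,M)$ is connected. Away from a short list of degenerate small surfaces, $(S,M)$ admits an ideal triangulation $T$ with no self-folded triangle; I would pick such a $T$, noting that $B(T)$ is mutation equivalent to $Q=B(T_0)$. The degenerate surfaces I would dispose of separately, checking in each case directly that some triangulation has a block-decomposable adjacency quiver. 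So it suffices to show that $B(T)$ admits a block decomposition, where now every interior arc of $T$ is a side of exactly two distinct triangles.

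Next I would assign a block to each triangle $\Delta$ of $T$, chosen from Figure~\ref{fig:blocks} according to the number $k\in\{0,1,2\}$ of sides of $\Delta$ lying on $\partial S$: for $k=0$ the oriented $3$-cycle on three outlets, for $k=1$ the single arrow joining two outlets, and for $k=2$ the single outlet-vertex. Labelling the vertices of the block $B_\Delta$ by the interior sides of $\Delta$, each interior arc of $T$ appears as the label of exactly two block-vertices; matching these pairs yields a partial matching of the combined outlet set in which, precisely because $T$ has no self-folded triangle, no outlet is matched to a vertex of its own block. Gluing the $B_\Delta$ along this matching and cancelling the resulting $2$-cycles then produces a quiver $Q'$ together with a block decomposition of it.

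It then remains to verify $Q'=B(T)$, which I expect to be a routine local check: by definition the entry $b_{ab}$ of the adjacency matrix of $T$ is the signed count, over the triangles having both arcs $a$ and $b$ among their sides, of $\pm1$ according to the clockwise cyclic order of $a$ and $b$ around that triangle, and this matches the arrow count produced by the glued blocks — the case of two triangles contributing oppositely-oriented arrows between $a$ and $b$ being recorded as a cancelled $2$-cycle exactly as $(+1)+(-1)=0$ in the signed count. Hence $B(T)$, being equal to the block-decomposable quiver $Q'$, is itself block-decomposable, and since $B(T)$ is mutation equivalent to $Q$, the theorem follows.

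The hard part will be the reduction step: showing that every non-degenerate surface admits a triangulation with no self-folded triangle, and handling by hand the finitely many degenerate small surfaces (spheres with few punctures, once-punctured polygons with few sides, and the like) where no such triangulation exists — there one verifies block-decomposability of a specific triangulation directly, or identifies the quiver with a small Dynkin type. Everything else is bookkeeping, the one delicate point being the accounting of $2$-cycles forced when two triangles of $T$ share two arcs. As an alternative to the reduction, one could instead retain self-folded triangles and absorb them using the remaining special blocks of Figure~\ref{fig:blocks}, at the price of a more involved local verification of $Q'=B(T)$.
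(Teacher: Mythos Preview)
The paper does not prove this statement at all: Theorem~\ref{thm:blocks} is quoted from \cite[Theorem~13.3]{FST07} and used as a black box in the proof of Theorem~\ref{thm:finite_type}, with no argument supplied here. So there is no ``paper's own proof'' to compare your proposal against.

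That said, your sketch is essentially the Fomin--Shapiro--Thurston argument itself: pass to a triangulation without self-folded triangles (or handle them with the special blocks), assign a block to each ideal triangle according to how many of its sides are boundary arcs, glue along shared interior arcs, and check the signed adjacency count matches. Your identification of the delicate points --- existence of a triangulation without self-folded triangles on generic surfaces, the finite list of small exceptional surfaces, and the $2$-cycle cancellation when two triangles share two arcs --- is accurate. One small caveat: your ``$k=2$'' case assigns a single isolated outlet-vertex, which is not one of the blocks in Figure~\ref{fig:blocks}; in practice a triangle with two boundary sides contributes a single interior arc that simply appears as a vertex to be matched from the neighbouring triangle's block, so no block is needed for it at all. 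Otherwise the outline is sound and would reconstruct the cited result.
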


\section{Mutations of Polytopes}
\label{sec:edge_mutations}

In two dimensions all combinatorial mutations are `tropicalisations' of cluster mutations. While this ceases to be true in higher dimensions there is a natural class of combinatorial mutations, the \emph{edge mutations} which do appear in this way. In terms of the definition of combinatorial mutation given in \cite{ACGK}, edge mutations are those which have one-dimensional \emph{factor}. In particular each edge mutation is obtained by studying the effect of the following birational maps -- an \emph{algebraic mutation}~\cite{ACGK} -- on the Newton polyhedra of certain Laurent polynomials. Throughout this section $N$ denotes an $n$-dimensional lattice (not necessarily related to the definition of a cluster algebra). We recall that, working over $\CC$, if $M$ is the lattice dual to $N$, the torus $T_M$ is defined to be $\Spec(\CC[N])$.

\begin{dfn}
	\label{def:alg_mutation}
	Given an element $w \in M$, the \emph{weight vector}, and $f \in \Ann(w)$, the \emph{factor}, define a birational map $\phi_{w,f} \colon T_M \dashrightarrow T_M$ sending
	\[
	z^n \mapsto z^n(1+z^f)^{\langle w,n\rangle}.
	\]
	Given a Laurent polynomial $W \in \CC[N]$ such that $\phi^\star_{w,f}(W) \in \CC[N]$ say that $W$ is mutable with weight vector $w$ and factor $f$. 
\end{dfn}

\begin{dfn}[Cf.\@ {\cite[p$12$]{ACGK}}]
	\label{def:mutation}
	Fix a Fano polytope $P \subset N_\QQ$ and its dual $P^\circ \subset M_\QQ$, a weight vector $w \in M$, and factor $f \in \Ann(w)$. Define a piecewise linear map $T_{w,f}\colon M_\QQ \rightarrow M_\QQ$ by setting
	\[
	T_{w,f} \colon m \mapsto m + \max(0,\langle m,f \rangle)w
	\]
	If $T_{w,f}(P^\circ)$ is a convex polytope then we say $P$ admits the mutation $(w,f)$ and that $P$ mutates to $(T_{w,f}(P^\circ))^\circ$.
\end{dfn}

\begin{rem}
	This definition of mutation is really a `dual characterisation' of \cite[Definition~$5$]{ACGK}, which encodes how the Newton polytope of a Laurent polynomial changes under algebraic mutations.
\end{rem}

\begin{rem}
	In \cite{ACGK} the authors show that the result of applying a mutation to a Fano polytope produces another Fano polytope, so the last dualization in Definition~\ref{def:mutation} is well-defined.
\end{rem}

\begin{pro}
	\label{pro:alg_to_combinatorial}
	Given $w \in M$,~$f \in \Ann(w)$ and a mutable Laurent polynomial $W \in \CC[N]$ we have the following identity;
	\[
	\Newt\left(\phi^\star_{w,f} W\right)^\circ = T_{w,f}\left(\Newt(W)^\circ\right).
	\]
\end{pro}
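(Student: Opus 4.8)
The plan is to recast the statement as an identity of support functions, in which the algebraic mutation $\phi^\star_{w,f}$ becomes a single affine substitution. For a polytope $Q\subset N_\QQ$ write $h_Q(u)=\max_{n\in Q}\langle u,n\rangle$ for its support function; I will use $h_{Q_1+Q_2}=h_{Q_1}+h_{Q_2}$, $h_{\operatorname{conv}\{0,f\}}(u)=\max(0,\langle u,f\rangle)$, and the tautology $Q^\circ=\{m\in M_\QQ:\langle m,n\rangle\ge-1\ \text{for all }n\in Q\}=\{m:h_Q(-m)\le1\}$. A further ingredient is the multiplicativity of Newton polytopes over $\CC$, $\Newt(PQ)=\Newt(P)+\Newt(Q)$: the inclusion $\subseteq$ is clear from $\operatorname{supp}(PQ)\subseteq\operatorname{supp}(P)+\operatorname{supp}(Q)$, and for $\supseteq$ a vertex $p+q$ of $\Newt(P)+\Newt(Q)$ has $p,q$ as its unique representatives in $\Newt(P),\Newt(Q)$, so the coefficient of $z^{p+q}$ in $PQ$ is the product of two nonzero extreme coefficients.

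Decompose $W$ by $w$-height: $W=\sum_h W_h$, where $W_h$ is the sum of the terms $a_nz^n$ with $\langle w,n\rangle=h$. As $\langle w,f\rangle=0$, the monomial $z^f$ has height $0$, so $\phi^\star_{w,f}(W)=\sum_h W_h(1+z^f)^h$ is again graded by height, with height-$h$ part $U_h:=W_h(1+z^f)^h$. For $h\ge0$ this lies in $\CC[N]$ with $\Newt(U_h)=\Newt(W_h)+h\operatorname{conv}\{0,f\}$; for $h<0$, mutability of $W$ is precisely the statement that $(1+z^f)^{-h}$ divides $W_h$ in $\CC[N]$, whence $U_h\in\CC[N]$ and $\Newt(W_h)=\Newt(U_h)+(-h)\operatorname{conv}\{0,f\}$. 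Either way, passing to support functions gives the single identity $h_{\Newt(U_h)}(u)=h_{\Newt(W_h)}(u)+h\cdot\max(0,\langle u,f\rangle)$, for every $h$. The $U_h$ are supported at distinct heights, so no cancellation occurs in $\sum_h U_h$ and $\Newt(\phi^\star_{w,f}W)=\operatorname{conv}\bigcup_h\Newt(U_h)$ has support function $\max_h h_{\Newt(U_h)}$; since every $n$ occurring in $W_h$ has $\langle w,n\rangle=h$, the term $h\cdot\max(0,\langle u,f\rangle)$ equals $\langle\max(0,\langle u,f\rangle)w,\,n\rangle$ and folds into the first argument, giving
\[
h_{\Newt(\phi^\star_{w,f}W)}(u)=\max_h\ \max_{n\in\operatorname{supp}W_h}\langle u+\max(0,\langle u,f\rangle)w,\ n\rangle .
\]
Because $\max(0,\langle u,f\rangle)w$ does not depend on $h$ or $n$, the right-hand side is $h_{\Newt(W)}\bigl(u+\max(0,\langle u,f\rangle)w\bigr)$. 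Thus $h_{\Newt(\phi^\star_{w,f}W)}=h_{\Newt(W)}\circ\sigma$, where $\sigma\colon M_\QQ\to M_\QQ$, $\sigma(u)=u+\max(0,\langle u,f\rangle)w$, is a piecewise-linear bijection fixing the pairing with $f$.

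To conclude, polarise. By $Q^\circ=\{m:h_Q(-m)\le1\}$ and the identity just proved, $m\in\Newt(\phi^\star_{w,f}W)^\circ$ iff $h_{\Newt(W)}(\sigma(-m))\le1$, i.e.\ iff $-\sigma(-m)\in\Newt(W)^\circ$. Setting $\tau(m):=-\sigma(-m)$, again a piecewise-linear bijection of $M_\QQ$, this reads $\Newt(\phi^\star_{w,f}W)^\circ=\tau^{-1}\bigl(\Newt(W)^\circ\bigr)$, and it remains to identify $\tau^{-1}$ with the map $T_{w,f}$ of Definition~\ref{def:mutation}.

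That last identification is where I expect the real care to be needed: one must check that $\tau^{-1}$ is the identity on one of the half-spaces $\{\langle m,f\rangle\le0\}$, $\{\langle m,f\rangle\ge0\}$ and a shear by $\pm\langle m,f\rangle w$ on the other, so that it agrees with $m\mapsto m+\max(0,\langle m,f\rangle)w$ — a matter of keeping the orientation conventions for $\langle-,-\rangle$, for $\phi_{w,f}$ (Definition~\ref{def:alg_mutation}), and for the polar dual mutually consistent. The only other delicate point is the uniform identity for $h_{\Newt(U_h)}$: that Minkowski addition (for $h\ge0$) and Minkowski subtraction (for $h<0$) contribute the same term $h\cdot\max(0,\langle u,f\rangle)$ to the support function — and here the hypothesis that $W$ is mutable is exactly what makes $(-h)\operatorname{conv}\{0,f\}$ a genuine Minkowski summand of $\Newt(W_h)$ for $h<0$. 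Everything else — the reduction to support functions, the multiplicativity of $\Newt$, and the fact that the support function of a convex hull of polytopes is the maximum of their support functions — is routine.
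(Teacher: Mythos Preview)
Your argument is sound and considerably more explicit than the paper's own proof, which is a two-sentence pointer: the compatibility of algebraic and combinatorial mutation is declared to hold ``by construction'' and the piecewise-linear description of the latter is attributed to the proof of Proposition~4 in~\cite{ACGK}. In other words, the paper outsources the content to the original source, whereas you give a self-contained proof via support functions. Your decomposition $W=\sum_h W_h$ by $w$-height, together with the Minkowski-additivity of $\Newt$ and the observation that mutability is exactly the divisibility needed to make $h\cdot\max(0,\langle u,f\rangle)$ the correct correction term for \emph{all} $h$, is precisely the mechanism underlying the ACGK construction; you have simply unpacked it.

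Your caution about the final identification $\tau^{-1}=T_{w,f}$ is well placed and, in fact, with the conventions exactly as written in Definitions~\ref{def:alg_mutation} and~\ref{def:mutation} one computes $\tau(m)=m+\min(0,\langle m,f\rangle)w$, hence $\tau^{-1}(m)=m-\min(0,\langle m,f\rangle)w$, which differs from $T_{w,f}(m)=m+\max(0,\langle m,f\rangle)w$ by the global shear $m\mapsto m+\langle m,f\rangle w\in\GL(M_\QQ)$. This is a genuine sign/orientation ambiguity of the sort you anticipate (and of the sort that pervades the mutation literature, where formulae routinely differ by such shears or by replacing $f$ with $-f$); it does not affect the substance, since the two piecewise-linear maps are $\GL$-conjugate and the polar dual of the mutated Newton polytope is determined only up to such an identification. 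You are right to isolate this as the one point requiring care, and the paper's appeal to \cite{ACGK} is presumably where the conventions are reconciled.
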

\begin{proof}
	The notion of combinatorial mutation is compatible with the mutation $W$ by construction. The interpretation of a combinatorial mutation as a piecewise linear map is made in the proof of Proposition $4$ in \cite{ACGK}.
\end{proof}

\begin{dfn}
	\label{def:compatible_collection}
	We define \emph{mutation data} to be elements $(w,f) \in M\oplus N$ such that $w$ and $f$ are primitive, and $f \in \Ann(w)$. A set of mutation data $\{(w_i,f_i) \in M \oplus N : i \in I\}$ for a finite index set $I$ is called \emph{compatible} if 
	\[
	\langle w_i,f_j \rangle = -\langle w_j,f_i\rangle.
	\]
\end{dfn}

\begin{rem}
	If $\dim N = 2$ any finite collection edge mutation data since $\langle w_i,f_j\rangle = w^i\wedge w^j$, for a choice of identification $\bigwedge^2M \cong \ZZ$.
\end{rem}

\begin{dfn}
	To a compatible collection of mutation data $\cE$ we define a quiver $Q_{\cE}$ as follows,
	\begin{enumerate}
		\item The vertex set of $Q_{\cE}$ is $\cE$.
		\item Between two vertices $(w_i,f_i)$ and $(w_j,f_j)$ there are $\langle w_i,f_j\rangle$ arrows, with sign indicating orientation.
	\end{enumerate}
\end{dfn}
Observe that, as $\langle w_i,f_j\rangle$ is skew-symmetric, the quiver $Q_\cE$ contains no loops or two cycles. Note that we can use this definition to assign a \emph{cluster algebra} to a compatible collection of mutation data. We define a rule governing how compatible collections of mutations themselves mutate.

\begin{dfn}
	\label{def:mutate_seeds}
	Given a compatible collection of mutation data $\cE$, let $L$ be the sublattice of $M \oplus N$ generated by the elements of $\cE$, and let $\{(w_i,f_i),(w_j,f_j)\} := \langle w_i,f_j\rangle$ define a skew-symmetric form on $L$. Fixing a pair $E_k = (w_k,f_k) \in \cE$ we \emph{mutate} $\cE$ to a new collection $\cE_k$ as follows:
	\begin{itemize}
		\item $E_k \mapsto -E_k$;
		\item $E_i \mapsto E_i - \max(\{E_i,E_k\},0)E_k$, if $i \neq k$.
	\end{itemize}
\end{dfn}

This formula is identical to the mutation of seed data given in~\cite{FG09}; a connection we now make precise. Fix a compatible collection of mutations $\cE$ and define a skew-symmetric form $[-,-]$ on $\ZZ^{\cE}$ defined by setting $[e_i,e_j] := \{\theta(e_i),\theta(e_j)\}$, where $\theta\colon \ZZ^\cE \to M\oplus N$ is defined by sending $e_i \mapsto (w_i,f_i)$. The following Lemma follows immediately by comparison of the formulae for mutating seed data in a cluster algebra with Definition~\ref{def:mutate_seeds}.

\begin{lem}
	\label{lem:fix_cluster} 
	The operations of mutation given in Definition~\ref{def:mutate_seeds}, and of mutation of the seeds defined above, are intertwined by $\theta$.
\end{lem}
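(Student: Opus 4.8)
The plan is to show that the diagram
\[
\begin{array}{ccc}
\ZZ^\cE & \xrightarrow{\ \mu_k\ } & \ZZ^{\cE_k} \\
\downarrow{\scriptstyle\theta} & & \downarrow{\scriptstyle\theta_k} \\
M\oplus N & \xrightarrow{\ \mu_k\ } & M\oplus N
\end{array}
\]
commutes, where the top map is seed-data mutation in the sense of \cite{FG09} with respect to the form $[-,-]$, the bottom map is the mutation of Definition~\ref{def:mutate_seeds}, and $\theta,\theta_k$ send the standard basis vectors $e_i$ to the mutation data $(w_i,f_i)$ and $(w_i',f_i')$ respectively. Since $\theta$ is defined so that $[e_i,e_j]=\{\theta(e_i),\theta(e_j)\}$ with $\{-,-\}$ the form on $L\subseteq M\oplus N$ appearing in Definition~\ref{def:mutate_seeds}, the exchange matrix $b_{ij}=[e_i,e_j]$ agrees with the pairing $\langle w_i,f_j\rangle$ used both to build $Q_\cE$ and to drive the mutation in Definition~\ref{def:mutate_seeds}. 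So it suffices to compare formulae coefficient-by-coefficient.

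First I would fix notation: write the seed-mutation rule from Definition~\ref{def:seed_mutation} (equivalently the rule recalled in \S\ref{sec:clusters}) as $e_k'=-e_k$ and $e_i'=e_i+\max(b_{ik},0)\,e_k$ for $i\neq k$, where $b_{ik}=[e_i,e_k]$. Applying $\theta_k$ to the right-hand side and using $\theta_k(e_k')=-(w_k,f_k)=-E_k$ and $\theta_k(e_i')=E_i+\max(b_{ik},0)E_k$, one reads off exactly the assignment $E_k\mapsto -E_k$, $E_i\mapsto E_i+\max(\langle w_i,f_k\rangle,0)E_k$. Comparing with Definition~\ref{def:mutate_seeds}, which reads $E_i\mapsto E_i-\max(\{E_i,E_k\},0)E_k$, and recalling $\{E_i,E_k\}=\langle w_i,f_k\rangle=-\langle w_k,f_i\rangle$ by compatibility, we have $-\max(\{E_i,E_k\},0)=-\max(\langle w_i,f_k\rangle,0)$. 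This is the sign point one must be careful about: the two conventions must be matched, and here the claim is literally that they do match once one unwinds that $[e_i,e_j]=\langle w_i,f_j\rangle$ (rather than $\langle w_j,f_i\rangle$) under $\theta$; with the conventions of the paper the formulae are identical, and the verification is the whole content of the Lemma.

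Then I would note that $\mu_k\colon\ZZ^\cE\to\ZZ^{\cE_k}$ and $\mu_k\colon M\oplus N\to M\oplus N$ are both $\ZZ$-linear on the nose (not merely piecewise-linear, because the $\max$ is applied to fixed integers $b_{ik}$, not to a varying argument), so checking commutativity on the basis $\{e_i\}$ suffices, which is precisely what the previous paragraph does. Finally one observes that the new seed data $\cE_k$ is again a compatible collection — this is implicit in Definition~\ref{def:mutate_seeds} being well-posed and follows from the fact that seed mutation preserves the form, hence preserves the compatibility identity $\langle w_i',f_j'\rangle=-\langle w_j',f_i'\rangle$ — so that $\theta_k$ is defined and the statement makes sense. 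The only genuine obstacle is bookkeeping the sign and index conventions between the $\mathcal A$-side seed-data mutation of \cite{FG09} and Definition~\ref{def:mutate_seeds}; there is no analytic or combinatorial difficulty, and once the dictionary $b_{ij}=\langle w_i,f_j\rangle$ is in place the two rules are term-by-term the same.
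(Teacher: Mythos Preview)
Your approach is exactly the paper's: the lemma is stated to follow ``immediately by comparison of the formulae,'' and you have simply spelled out that comparison via the commuting square and the dictionary $b_{ik}=[e_i,e_k]=\{E_i,E_k\}=\langle w_i,f_k\rangle$. One caveat: the sign you flag is not actually resolved by your argument as written---with the paper's conventions you obtain $\theta(e_i')=E_i+\max(\{E_i,E_k\},0)E_k$ while Definition~\ref{def:mutate_seeds} has a minus sign, so the two formulae match only up to what appears to be a typographical sign in Definition~\ref{def:mutate_seeds} (consistent with the paper's own remark that ``this formula is identical to the mutation of seed data given in~\cite{FG09}''); you should say this explicitly rather than assert the conventions already agree.
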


\begin{rem}
	\label{rem:bad_fact}
	In dimensions higher than two a compatible collection of mutation data which defines a set of combinatorial mutations of a given polytope can transform by mutation to a compatible collection of mutation data which does not define a set of combinatorial mutations of the transformed polytope. In particular the piecewise linear maps may fail to preserve convexity. This appears to be a important obstruction to generalising the two-dimensional theory of mutations to higher dimensional polytopes. 
\end{rem}

\begin{pro}
	Given seed data $\cE$ such that $Q_{\cE}$ is a directed simply-laced Dynkin diagram the number of polytopes obtained by successive edge mutation is bounded by the numbers of seeds in the cluster algebra determined by $Q_{\cE}$. If $Q_\cE$ is of type $A_n$ this bound is the Catalan number $C_n$.
\end{pro}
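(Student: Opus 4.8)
The plan is to exploit Lemma~\ref{lem:fix_cluster} to transfer the combinatorics of seed mutation in the cluster algebra $\cA(Q_\cE)$ to the combinatorics of edge mutation of polytopes, and then count. First I would fix a Fano polytope $P \subset N_\QQ$ together with the compatible collection $\cE = \{(w_i,f_i) : i \in I\}$, so that each datum $(w_i, f_i)$ determines a piecewise-linear map $T_{w_i, f_i}$ as in Definition~\ref{def:mutation} and, when $T_{w_i,f_i}(P^\circ)$ is convex, an actual combinatorial mutation of $P$. The sequence of polytopes obtained by successive edge mutation is indexed by finite words $k_1 k_2 \cdots k_r$ in the index set $I$ (those for which every intermediate convexity condition holds); to each such admissible word we associate the polytope $P_{k_1 \cdots k_r}$ and, via Definition~\ref{def:mutate_seeds}, the mutated collection $\cE_{k_1 \cdots k_r}$. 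By Proposition~\ref{pro:alg_to_combinatorial} (really by its underlying identification of combinatorial mutation with the map $T_{w,f}$), the polytope $P_{k_1 \cdots k_r}$ is determined, up to the ambient $\GL(N)$-action, by the pair $(P, \cE)$ together with the combinatorial type of the mutated collection $\cE_{k_1 \cdots k_r}$.

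Next I would argue that the relevant invariant of $\cE_{k_1 \cdots k_r}$ is exactly a \emph{seed} of the cluster algebra determined by $Q_\cE$. By Lemma~\ref{lem:fix_cluster}, the map $\theta\colon \ZZ^\cE \to M \oplus N$ intertwines mutation of collections with mutation of seeds, so distinct mutated collections (up to the symmetry that reverses no information relevant to the polytope) pull back from distinct seeds in the cluster pattern of $Q_\cE$. Hence the assignment ``admissible word $\mapsto$ polytope'' factors through the assignment ``word $\mapsto$ seed'', and two words producing the same seed produce the same polytope. Therefore the number of polytopes obtained by successive edge mutation is at most the number of distinct seeds reachable from the initial seed, i.e.\ the number of seeds of $\cA(Q_\cE)$. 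When $Q_\cE$ is a directed simply-laced Dynkin diagram, Theorem~\ref{thm:finite_cluster_algebras} guarantees this number is finite, and when $Q_\cE$ is of type $A_n$ the standard count (e.g.\ via triangulations of an $(n+3)$-gon) gives the Catalan number $C_n$.

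The main obstacle is the subtlety flagged in Remark~\ref{rem:bad_fact}: in dimension greater than two a mutated collection $\cE_{k_1 \cdots k_r}$ need not define genuine combinatorial mutations of $P_{k_1 \cdots k_r}$, because the piecewise-linear map $T_{w,f}$ may fail to preserve convexity. This means the set of admissible words is only a subset of all words, so the tree of reachable seeds in the cluster pattern may be pruned. The point to make carefully is that pruning can only \emph{decrease} the count: every polytope we reach corresponds to some reachable seed, so the bound by the number of seeds, and in particular the Catalan bound $C_n$ in type $A_n$, is an upper bound and nothing more is claimed. I would also note the minor bookkeeping point that the frozen/unfrozen distinction plays no role here since $\cE$ is taken to be a full seed with $Q_\cE$ a Dynkin quiver, so every vertex is mutable and the cluster algebra in question is the one of Theorem~\ref{thm:finite_cluster_algebras}. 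Aside from that, the remaining steps — checking that $\theta$ is well defined on the sublattice $L$, and that the $\GL(N)$-ambiguity in reconstructing $P_{k_1 \cdots k_r}$ does not inflate the count — are routine.
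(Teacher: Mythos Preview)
Your approach is correct and matches the paper's intended reasoning. In the paper this proposition is stated immediately after Lemma~\ref{lem:fix_cluster} without proof; it is meant to follow directly from that lemma together with the finite-type classification (Theorem~\ref{thm:finite_cluster_algebras}), exactly as you argue. The paper later makes the same idea explicit in the two-dimensional setting via the tower of surjections~\eqref{eq:tower}: clusters surject onto polygons, which surject onto quivers. Your write-up is simply a more detailed, dimension-agnostic version of that surjection, together with the observation (addressing Remark~\ref{rem:bad_fact}) that non-convexity can only prune the tree and hence only helps the upper bound.

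One small slip: triangulations of an $(n+3)$-gon are counted by $C_{n+1}$, not $C_n$, so your parenthetical justification of the Catalan count does not match the index in the statement. The paper's own indexing here is already suspect (for $A_2$ the number of clusters is $5=C_3$, not $C_2$), so this is not a defect in your argument so much as an inconsistency inherited from the statement itself.
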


In fact, compatible collections of mutations appear whenever we have a cluster algebra with skew-symmetric exchange matrix.

\begin{pro}
	\label{pro:cluster_algebra_polygon}
	Every compatible collection of mutations determines and is determined by a skew-symmetric cluster algebra without frozen variables, together with a subspace $V$ of the kernel of the skew-symmetric form $\{-,-\}$ defined by the exchange matrix.
\end{pro}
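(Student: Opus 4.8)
The plan is to push everything, via Lemma~\ref{lem:fix_cluster}, into linear algebra about the homomorphism $\theta\colon\ZZ^\cE\to M\oplus N$, $e_i\mapsto(w_i,f_i)$. What one must establish is that a compatible collection $\cE$ is, up to the obvious enlargements of and pairing-preserving automorphisms of the ambient $M\oplus N$, exactly the data of (i) the skew form $[-,-]$ on $\ZZ^\cE$ given by $[e_i,e_j]=\{\theta(e_i),\theta(e_j)\}$ --- equivalently the quiver $Q_\cE$, equivalently a skew-symmetric cluster algebra $\cA(B)$ with no frozen variables, where $B=(b_{ij})$ is its exchange matrix --- together with (ii) the subspace $V:=\ker(\theta)\otimes\QQ$ of $\ZZ^\cE\otimes\QQ$. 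For the forward direction I would simply read off $Q_\cE$, hence $B$, hence $\cA(B)$ (which has no frozen variables, as every vertex of $Q_\cE$ is unfrozen), and then note that $\theta(x)=0$ implies $[x,y]=\{\theta(x),\theta(y)\}=0$ for all $y\in\ZZ^\cE$; thus $V$ lies in the radical of the form $\{-,-\}$ defined by $B$, exactly as the statement requires.

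For the converse I would reconstruct $\cE$ from the pair $(\cA(B),V)$. Put $V_\ZZ:=V\cap\ZZ^\cE$, a saturated sublattice since $V$ is a $\QQ$-subspace, and $Q:=\ZZ^\cE/V_\ZZ$, a free lattice; write $\pi\colon\ZZ^\cE\to Q$ and $\bar e_i:=\pi(e_i)$. As $V_\ZZ$ lies in the radical of $B$ and $B$ is skew, $B$ descends to a skew form $\bar B$ on $Q$. The naive attempt is to take $N:=Q$, $f_i:=\bar e_i$, and $w_i\in M:=\hom(N,\ZZ)$ determined by $\langle w_i,\pi(x)\rangle:=B(e_i,x)$ (well defined, again because $V_\ZZ$ is radical and $B$ is skew). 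One checks directly that $\langle w_i,f_j\rangle=b_{ij}$ recovers the quiver, that $\langle w_i,f_i\rangle=b_{ii}=0$ gives $f_i\in\Ann(w_i)$, that the identity $\langle w_i,f_j\rangle=-\langle w_j,f_i\rangle$ is the skew-symmetry of $B$, and that $\ker\theta=V_\ZZ$, so the subspace attached to this collection is again $V$.

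The main obstacle is the primitivity requirement built into the definition of mutation data: the $f_i=\bar e_i$ and the $w_i$ just produced need not be primitive --- already for the Kronecker matrix $B=\bigl(\begin{smallmatrix}0&2\\-2&0\end{smallmatrix}\bigr)$ one gets $w_1=2\,e_2^{\star}$. I would repair this by twisting: take $N:=Q\oplus Q$, keep $f_i:=(\bar e_i,0)$, and set $w_i:=\bigl(\bar B(\bar e_i,-),\,\psi(\bar e_i)\bigr)\in Q^{\vee}\oplus Q^{\vee}=M$ for a fixed isomorphism $\psi\colon Q\xrightarrow{\ \sim\ }Q^{\vee}$ (which exists as $Q$ is free of finite rank). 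The extra $\psi$-component makes every $w_i$ primitive while altering neither the pairings $\langle w_i,f_j\rangle=b_{ij}$ nor the kernel $\ker\theta=V_\ZZ$, and it reduces primitivity of both $w_i$ and $f_i$ to primitivity of the $\bar e_i$ in $Q=\ZZ^\cE/V_\ZZ$. The step I expect to take the most care is to confirm that this last condition on $V$ holds exactly for those subspaces that arise as $\ker(\theta)\otimes\QQ$ from a genuine (primitive) compatible collection: if $\cE$ is such a collection then $f_i$ primitive in $N$ forces $\bar e_i$ primitive in $\ZZ^\cE/\ker(\theta_N)$, and since $V_\ZZ\subseteq\ker(\theta_N)$ makes the latter a quotient of $\ZZ^\cE/V_\ZZ$, also $\bar e_i$ primitive in $\ZZ^\cE/V_\ZZ$ --- so the admissible subspaces on the two sides match and the correspondence is clean.
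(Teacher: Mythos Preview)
Your core construction is the paper's: the ``naive attempt'' you describe --- quotient $\ZZ^\cE$ by $V$, send $e_i$ to its image together with the linear form $\{e_i,-\}$ --- is exactly the map $\theta\colon n\mapsto (p(n),\{n,-\})$ that the paper writes down, modulo the fact that you have interchanged the roles of $M$ and $N$ (the paper puts the quotient in $M$ and the form-component in $\hom(M,\ZZ)$, you do the reverse). The paper's proof is a three-line sketch that only spells out the direction from $(\cA(B),V)$ to a compatible collection and does not check primitivity at all.

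You go further in two respects. First, you treat both directions and identify $V$ explicitly as $\ker(\theta)\otimes\QQ$ on the forward map. Second --- and this is a genuine addition --- you take the primitivity clause in Definition~\ref{def:compatible_collection} seriously: your doubling $N:=Q\oplus Q$ with the $\psi$-twist is a clean way to force $w_i$ primitive without disturbing the pairings or the kernel, and your closing paragraph correctly pins down which subspaces $V$ can actually occur (those for which each $\bar e_i$ is primitive in $\ZZ^\cE/V_\ZZ$). The paper's statement and proof simply elide this point; your version is more careful, at the cost of implicitly replacing ``a subspace $V$'' by ``an admissible subspace $V$'' in the statement. That refinement is honest and your argument for it is sound.
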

\begin{proof}
	Fix a skew-symmetric cluster algebra without frozen variables and a nominated subspace $V \subset \ker \{-,-\}$. Recall that a seed defines a basis $e_i$ of a lattice, which we denote $\widetilde{N}$. Define $M := \widetilde{N}/V$ and let $p \colon \widetilde{N} \rightarrow M$ be the canonical projection. The map $\theta\colon \widetilde{N} \rightarrow M \oplus\hom(M,\ZZ)$ defined by $\theta \colon n \mapsto ( p(n),\{n,-\})$ defines a compatible collection of mutation data with weight vectors in the lattice $M$.
\end{proof}

Note that $N$ and $M$ play dual roles to those in \cite{FG09}, and we insist throughout that $P \subset N_\QQ$. This exchange of roles explains the odd definition of $M$ in the proof of Proposition~\ref{pro:cluster_algebra_polygon}. To compare the birational maps associated to the two notions of mutations let $\bs$ be a seed of the cluster algebra determined by a compatible collection of mutation data, and let $\cE$ be the compatible collection corresponding to $\bs$. Fix an element $E_k = (w_k,f_k) \in \cE$ and consider the following diagram,
\begin{equation}
\label{eq:commuting_mutations}
\xymatrix{
	\mathcal{A}_{\bs} \ar@{-->}^{\mu_k}[rr] \ar^{p}[d] & & \mathcal{A}_{\mu_k(\bs)} \ar^{p}[d] \\
	T_{M} \ar@{-->}_{\phi_{(w_k,f_k)}}[rr] & & T_{M},
}
\end{equation}

\begin{pro}
	\label{prop:seed_mutations}
	The diagram shown in \eqref{eq:commuting_mutations} commutes
\end{pro}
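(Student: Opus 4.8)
The plan is to verify the square in \eqref{eq:commuting_mutations} by pulling back monomial functions from the lower-right copy of $T_M$ and checking that the two routes around the square induce the same map of function fields. Since all four arrows are given by a Laurent monomial times an integer power of a single binomial, this will reduce to matching one monomial and one exponent, so I expect the argument to be short once the lattice identifications are pinned down.

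First I would unwind the identifications. Write $\tN$ for the lattice underlying the seed $\bs$, with basis $e_1,\dots,e_n$, and $\tM=\Hom(\tN,\ZZ)$; then $\mathcal{A}_\bs=T_{\tN}=\Spec\CC[\tM]$, and the $\mathcal{A}$-mutation recalled in \S\ref{sec:clusters} acts on functions by $\mu_k^\star(z^m)=z^m(1+z^{v_k})^{\langle e_k,m\rangle}$ for $m\in\tM$, where $v_k=\{e_k,-\}\in\tM$. On the target side, following the proof of Proposition~\ref{pro:cluster_algebra_polygon}, $M=\tN/V$, the torus $T_M=\Spec\CC[\Hom(M,\ZZ)]$, and $(w_k,f_k)=\theta(e_k)=(p(e_k),\{e_k,-\})$, so by Definition~\ref{def:alg_mutation} the map $\phi_{(w_k,f_k)}$ acts by $\phi_{(w_k,f_k)}^\star(z^g)=z^g(1+z^{f_k})^{\langle w_k,g\rangle}$ for $g\in\Hom(M,\ZZ)$. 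The point to record is that $f_k\in\Hom(M,\ZZ)$ is, by the construction of $\theta$, the factorisation of $\{e_k,-\}\colon\tN\to\ZZ$ through $p$ --- which exists precisely because $V\subseteq\ker\{-,-\}$ --- so $f_k\circ p=v_k$ in $\tM$. Finally, the projection $p\colon\tN\to M$ induces a (dominant) torus homomorphism $p\colon\mathcal{A}_\bs\to T_M$ whose pullback on functions is $p^\star(z^g)=z^{g\circ p}$, i.e.\ $p^\star$ is induced by the dual inclusion $\Hom(M,\ZZ)\hookrightarrow\tM$, $g\mapsto g\circ p$.

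Then I would chase a monomial $z^g$, $g\in\Hom(M,\ZZ)$, around both ways. Going by the bottom and then the left edge gives $p^\star\phi_{(w_k,f_k)}^\star(z^g)=z^{g\circ p}(1+z^{f_k\circ p})^{\langle w_k,g\rangle}$, while going by the right and then the top edge gives $\mu_k^\star p^\star(z^g)=z^{g\circ p}(1+z^{v_k})^{\langle e_k,\,g\circ p\rangle}$. These agree term by term: $f_k\circ p=v_k$ by the previous paragraph, and $\langle w_k,g\rangle=g(p(e_k))=(g\circ p)(e_k)=\langle e_k,\,g\circ p\rangle$ by adjunction. As the monomials $z^g$ span $\CC[\Hom(M,\ZZ)]$, the two pulled-back rational maps $\mathcal{A}_\bs\dashrightarrow T_M$ coincide; both are a fixed Laurent monomial times an integer power of $1+z^{v_k}$, hence are regular on the complement of the pullback of $\{z^{v_k}=-1\}$, so the equality really is of birational maps. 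This is the assertion of \eqref{eq:commuting_mutations}.

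I do not anticipate a real obstacle: the content is bookkeeping, and the one place demanding care is that here $N$ and $M$ play the roles dual to those in \cite{FG09} (as flagged after Proposition~\ref{pro:cluster_algebra_polygon}), so one must be sure to identify the combinatorial factor $f_k$ with the element $v_k=\{e_k,-\}$ governing the cluster exchange relation. Once that identification is made --- and it is forced by the definition of $\theta$ --- commutativity is the one-line computation above.
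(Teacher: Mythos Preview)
Your argument is correct and is exactly the ``exercise in writing out the definitions'' that the paper's proof invokes (with a reference to \cite[\S3]{KNP15}); you have simply carried it out explicitly, including the key identification $f_k\circ p=v_k$ and the adjunction $\langle w_k,g\rangle=\langle e_k,g\circ p\rangle$. There is nothing to add---the paper gives no further detail beyond the citation.
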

\begin{proof}
	This is an exercise in writing out the definitions of the respective mutations, see~\cite[Section~3]{KNP15}.
\end{proof}

\begin{eg}
	The del~Pezzo surface of degree $5$ admits a toric degeneration to a toric surface $Z$ with a pair of $A_1$ singularities. Given a three-dimensional linear section $X$ of the Grassmannian $\Gr(2,5)$ $X$ admits a toric degeneration to the projective cone over $Z$. The fan determined by this toric threefold is formed by taking the cones over the faces of the reflexive polytope with PALP id $245$.
	
	In Figure~\ref{fig:B5_mutations} we show a pentagon of polytopes obtained by successively mutating the polytope shown in the top-right with respect to the mutation data
	\[
	\cE := \{(w_1,f_1),(w_2,f_2)\}
	\] 
	where,
	\begin{align*}
		w_1 := (-1,0,0), && f_1 := (0,1,1)^T, \\
		w_2 := (0,0,-1), && f_2 := (-1,0,0)^T. \\
	\end{align*}

	We recall that there is an $A_2$ cluster structure on co-ordinate ring of the Grassmannian, and a toric degeneration of $\Gr(2,5)$ for each cluster chart in the dual Grassmannian \cite{RW15,RW17}. We expect that cluster structures in the mirror to a Fano variety to be detected by such compatible collections of mutations.
	
	Note that the polytopes we show in Figure~\ref{fig:B5_mutations} are not dual to Fano polytopes. However, recalling that $B_5$ has Fano index $2$, we can obtain a reflexive polytope by dilating each of the polytopes shown in Figure~\ref{fig:B5_mutations} by a factor of two, and translating.
	
	\begin{figure}
		\includegraphics{./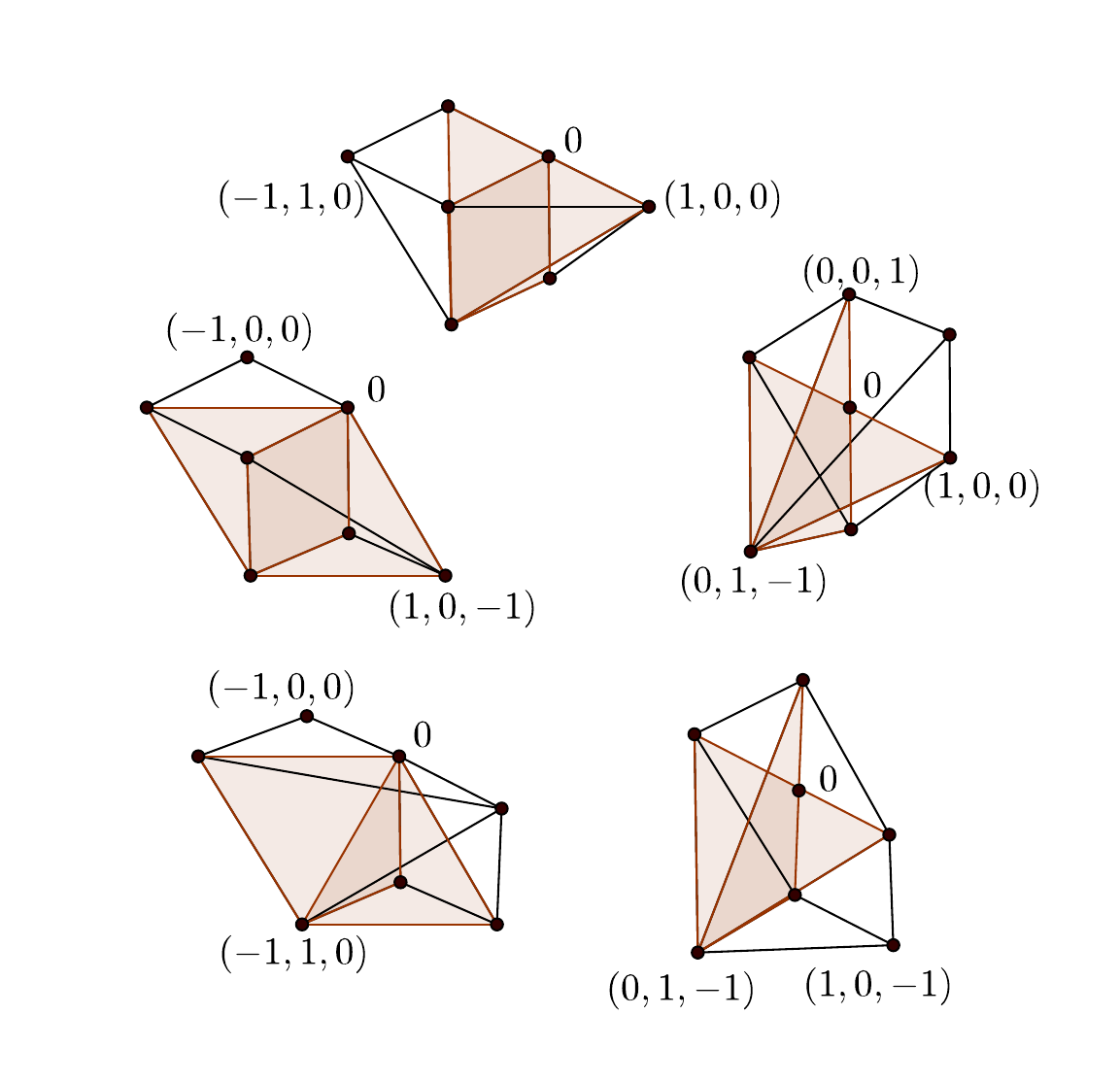}
		\caption{Pentagon of edge mutations among toric degenerations of $B_5$.}
		\label{fig:B5_mutations}
	\end{figure}
\end{eg}

In the two dimensional case, we can canonically define a maximal set of compatible mutations, making use of the notion of singularity content~\cite{AK14}.

\begin{dfn}[Cf.\@ {\cite[\S$1.2$]{KNP15}}]
	\label{def:polygon_seed}
	Given a Fano polygon $P \subset N_\QQ$ with singularity content $(n,\cB)$ and $m := |\cB|+n$, we define:
	\begin{itemize}
		\item An index set $I$ of size $m$, with a subset $I_{uf}$ of size $n$ and functions:
		\begin{align*}
		\phi_{uf} \colon I_{uf} \rightarrow \{\textrm{edges of $P$}\} && \phi_f \colon I \setminus I_{uf} \rightarrow \cB
		\end{align*}
		Here the fibre $\phi_{uf}^{-1}(E)$ has $m_E$ elements, where $m_E$ is the singularity content of $\Cone(E)$, and $\phi_f$ is a bijection.
		\item A lattice map $\rho \colon \ZZ^m \rightarrow M$ sending each basis element to the primitive, inward-pointing normal to the edge of $P$ defined by the cone given by the specified functions $\phi_{uf}$ and $\phi$.
		\item A form $\{e_i,e_j\} := \rho(e_i) \wedge \rho(e_j)$. Note that this is an integral skew-symmetric form.
	\end{itemize}
\end{dfn}

By \cite[Proposition~$3.17$]{KNP15} the construction of a quiver from a polytope provided by Definition~\ref{def:polygon_seed} intertwines polygon and quiver mutations. We let $(E_P,C_P)$ denote the cluster algebra associated to a Fano polygon, where $E_P$ is the standard basis $e_i$ of $\ZZ^n$, and $C_P$ is the standard transcendence basis of the field of rational functions in $n$ variables over $\QQ(x_i : i \in I\setminus I_{uf})$. We say a Fano polygon is of \emph{finite mutation type} if it is mutation equivalent to only finitely many Fano polygons.

\begin{conjecture}
	The cluster algebras $\mathcal{C}_P$ for Fano polygons $P$ together with a bijection between the set of frozen variables and $\mathcal{B}$ is a complete mutation invariant Fano polygons.
\end{conjecture}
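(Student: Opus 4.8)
We outline a plan towards the conjecture. Write $\Phi$ for the assignment $P \mapsto (\cC_P, \beta_P)$, where $\beta_P$ is the bijection between $\cB$ and the frozen variables of $\cC_P$ coming from the function $\phi_f$ of Definition~\ref{def:polygon_seed}; one must show that $\Phi$ descends to mutation-equivalence classes of Fano polygons and is injective there. The first point is immediate: if $P$ and $P'$ differ by a single combinatorial mutation then, by \cite[Proposition~$3.17$]{KNP15}, the seed construction of Definition~\ref{def:polygon_seed} carries this to a mutation of the associated seed, so $Q_P$ and $Q_{P'}$ are mutation equivalent and $\cC_P \cong \cC_{P'}$; since $\cB$ is part of the singularity content, which is a mutation invariant \cite{AK14}, and $\phi_f$ is transported along the mutation in the evident way, the isomorphism respects the bijections to $\cB$. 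Iterating over a chain of mutations gives well-definedness.

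For injectivity, suppose $\Phi(P) \cong \Phi(P')$, witnessed by an isomorphism carrying the seed $\bs_{P'}$ associated to $P'$ to a seed $\bs$ of $\cC_P$ and matching frozen variables compatibly with $\beta_P$, $\beta_{P'}$. Since $\bs$ and $\bs_P$ lie in the same mutation class we may write $\bs = \mu_{k_r}\cdots\mu_{k_1}(\bs_P)$, and, edge mutations being admissible in dimension two, \cite[Proposition~$3.17$]{KNP15} (see also Proposition~\ref{prop:seed_mutations}) realises each $\mu_{k_i}$ by a polygon mutation, producing a Fano polygon $P''$, mutation equivalent to $P$, whose seed is $\bs$. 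As $P''$ and $P'$ now carry the same quiver and the same labelled basket, the conjecture reduces to the following \emph{rigidity statement}: two Fano polygons whose seeds, in the sense of Definition~\ref{def:polygon_seed}, have the same quiver and the same labelled basket are $\GL_2(\ZZ)$-equivalent.

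To prove rigidity one reconstructs $P$, up to $\GL_2(\ZZ)$, from its quiver $Q$ and labelled basket. Grouping the vertices of $Q$ by equality of columns of the exchange matrix recovers the edges of $P$: since the primitive normals $\rho(e_i)$ span $\QQ^2$, equal columns is equivalent to equal normal, and distinct edges of a convex polygon carry distinct normals. For each edge $E$ this yields $m_E = |\phi_{uf}^{-1}(E)|$ together with, via $\beta_P$, the residual singularities on $E$ — non-empty exactly when $m_E = 0$, so every edge is detected. The cyclic multiset of primitive inward normals is then recovered from the exchange matrix up to $\GL_2(\ZZ)$: normalising one normal to $(1,0)$, the entries $b_{ij}$ fix the second coordinate of each remaining normal, and then, up to the residual shear, its first coordinate — consistency holding because the data arises from a genuine Fano polygon — and the cyclic order is the angular order of the resulting vectors. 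It remains to recover the lattice heights $h_E$ and lengths $\ell_E$. On an edge carrying a residual singularity these are forced by the type of that singularity and the relation $\ell_E = m_E h_E + w_0$; on each remaining ``clean'' edge one has only $\ell_E = m_E h_E$, and must appeal to the Fano hypothesis that every vertex of $P$ is a primitive lattice point, propagated around $P$ using that the oriented edge vectors sum to zero. Once all the pairs $(u_E, h_E)$ are known, the polygon $P = \{ x \in N_\QQ : \langle u_E, x \rangle \geq -h_E \text{ for every edge } E \}$ is determined up to $\GL_2(\ZZ)$.

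The crux, and the reason this remains a conjecture, is the last step: showing that with the quiver and labelled basket fixed the primitivity of every vertex leaves at most one admissible choice of heights on the clean edges. This is an arithmetic rigidity that must use two-dimensionality in an essential way; Remark~\ref{rem:bad_fact} already indicates that the analogous reconstruction fails in higher dimensions, since there convexity no longer bounds the supply of Fano polytopes realising a given quiver and basket. A secondary point requiring care is the surjectivity used in the reduction — that \emph{every} seed in the mutation class of $\bs_P$ is reached by a chain of polygon mutations — which in dimension two should follow from the admissibility of edge mutations and \cite[Proposition~$3.17$]{KNP15}, but which depends on knowing that the construction of Definition~\ref{def:polygon_seed} is surjective, up to polygon mutation, onto the mutation class.
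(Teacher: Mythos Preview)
The statement you are addressing is labelled a \emph{Conjecture} in the paper, and the paper offers no proof of it whatsoever: after stating the conjecture the author gives two illustrative examples (the Markov triples for $\PP^2$ and the $A_2$ quiver for $X_{5,5/3}$) and then moves on to \S\ref{sec:classification}. There is therefore no ``paper's own proof'' to compare your proposal against.

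Your write-up is, appropriately, not a proof but a programme, and you are candid about this: you isolate the rigidity statement (two Fano polygons with the same quiver and labelled basket are $\GL_2(\ZZ)$-equivalent) as the essential content, sketch how much of the polygon can be rebuilt from the exchange matrix and basket, and flag the two genuine obstructions --- uniqueness of heights on edges with empty residue, and surjectivity of polygon mutation onto the seed mutation class. Both gaps are real. For the first, note that without the basket constraint there \emph{are} distinct Fano polygons with identical quivers (any two $T$-polygons for $\PP^2$ with the same Markov triple but different $\GL_2(\ZZ)$-class would give this; more mundanely, rescaling arguments fail only because of primitivity, and you have not shown primitivity pins down the heights). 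For the second, your concern is exactly the content of the conjecture in the forward direction: it is not obvious that every seed reachable by cluster mutation is realised by some polygon in the mutation class, and Proposition~\ref{prop:seed_mutations} only gives one direction of the correspondence.

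In short: your outline is a sensible decomposition of what a proof would require, and it goes well beyond anything the paper attempts, but it is not a proof and you correctly do not claim otherwise.
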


\begin{eg}
	Consider the Fano polygon $P$ for $\PP^2$
	\begin{align*}
	\includegraphics[scale=0.75]{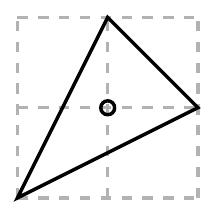}
	\end{align*}
	Computing the determinant of the inward-pointing normals we obtain the quiver $Q_P$
	\begin{align*}
	\xymatrix{
		& \bullet \ar^3[dr] & \\
		\bullet \ar^3[ur] &  & \bullet \ar^3[ll]
	}
	\end{align*}
	The mutations of this quiver are well-known, and the triple $(3a,3b,3c)$ of non-zero entries of the exchange matrix satisfy the Markov equation $a^2 + b^2 + c^2 = 3abc$. Indeed, as the polygon $P$ is mutated the corresponding toric surfaces are $\PP(a^2, b^2, c^2)$ for the same triples $(a,b,c)$. We see that in this case the mutations of the quivers exactly capture the mutations of the polygon.
\end{eg}

\begin{eg}
	\label{ex:special_surface}
	Consider the toric surface (using the notation for these surfaces appearing in \cite{A+}). $X_{5,5/3}$ associated with the Fano polygon shown below.
	\begin{align*}
	\includegraphics[scale=0.75]{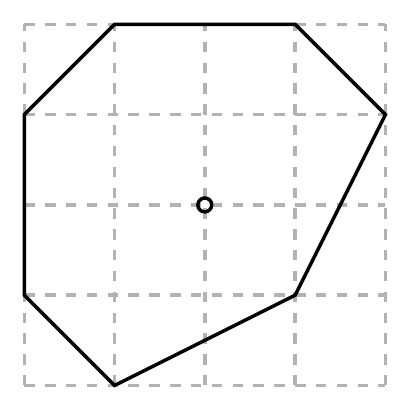}
	\end{align*}
	The quiver associated to this surface is simply the $A_2$ quiver,
	\[
	\xymatrix{
		\bullet \ar[r] & \bullet.
	}
	\]
	This example is important, both in this section, because it is an example of a \emph{finite type} polygon, and since a smoothing of this surface is given by $5$ Pfaffian equations, see \cite[\S$3.3$]{CH17}, a fact closely connected to the $A_2$ quiver we construct here.
\end{eg}

\section{Finite Type Classification}
\label{sec:classification}

We now make use of the classification of finite type and finite mutation type cluster algebras to establish the following result.

\begin{thm}
	\label{thm:finite_type}
	$P$ is of finite mutation type if and only if $Q_P$ is mutation equivalent to a quiver of type $(A_1)^n$, $A_2$, $A_3$, or $D_4$.
\end{thm}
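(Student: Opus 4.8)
The plan is to prove both implications by combining the finite-type and finite-mutation-type classifications of cluster algebras (Theorems~\ref{thm:finite_cluster_algebras}, \ref{thm:finite_mut_type}, \ref{thm:blocks}) with the compatibility of polygon mutation and quiver mutation established via Definition~\ref{def:polygon_seed} and \cite[Proposition~$3.17$]{KNP15}. The key observation, which must be isolated first, is that if $P$ is of finite mutation type then only finitely many Fano polygons occur in its class, hence only finitely many quivers $Q_{P'}$ occur; since $Q_{P'}$ is obtained from $Q_P$ by quiver mutation (compatibility), $Q_P$ has finite mutation class. Conversely, and this is the substantive direction, one must show that finiteness of the mutation class of $Q_P$ forces finiteness of the mutation class of $P$ \emph{and} restricts $Q_P$ to the short list $(A_1)^n, A_2, A_3, D_4$.

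For the forward direction ($P$ finite mutation type $\Rightarrow$ $Q_P$ on the list): having shown $Q_P$ has finite mutation class, apply Theorem~\ref{thm:finite_mut_type}. I would first dispose of the eleven exceptional types and the surface-triangulation types whose quivers genuinely contain arrows of multiplicity $\geq 2$ between unfrozen vertices, or more than $4$ vertices, by exhibiting in each case an infinite family of polygons in the mutation class — the $\PP^2$ example with the Markov quiver $(3,3,3)$ is the prototype: a multiple arrow between two unfrozen vertices produces, under iterated mutation, quivers with unboundedly large exchange-matrix entries, and by the polygon/quiver compatibility the corresponding polygons have unboundedly large (normalized) volume or unboundedly many boundary lattice points, so there are infinitely many of them. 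This reduces the possible $Q_P$ (up to mutation equivalence) to quivers that are orientations of simply-laced graphs, i.e.\ to the finite-type list via Theorem~\ref{thm:finite_cluster_algebras}, and then a direct check (again producing infinite families for $A_n$ with $n \geq 4$, $D_n$ with $n \geq 5$, and the exceptional $E$, $D_4$-versus-larger boundary) pins it to $(A_1)^n$, $A_2$, $A_3$, $D_4$. I would use singularity content $(n,\cB)$ here: the number of unfrozen vertices is $n$ and the frozen vertices are indexed by $\cB$, and for the surviving quivers one bounds $n$ and the structure of $\cB$.

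For the reverse direction ($Q_P$ mutation equivalent to one of the four types $\Rightarrow$ $P$ finite mutation type): since these quivers are of finite type, the ambient cluster algebra $\mathcal{C}_P$ has finitely many seeds (Theorem~\ref{thm:finite_cluster_algebras}), hence finitely many clusters and finitely many quivers in the class. By the compatibility of mutations, the polygons $P'$ mutation-equivalent to $P$ all have quiver among this finite list; but a single quiver can a priori be realized by infinitely many polygons, so one needs a boundedness argument. Here I would invoke the singularity-content bookkeeping together with the fact that mutation preserves the (suitably normalized) area / the quantity $12 = \sum$ (contributions of cones), or directly the classification result that for each of $(A_1)^n, A_2, A_3, D_4$ the polygons realizing it form a known finite list (as in Example~\ref{ex:special_surface} for $A_2$ and the surfaces $X_{5,5/3}$, and the $D_4$ case corresponding to the degree-$5$ del Pezzo); combinatorial mutation changes the polygon but within a class of bounded volume, so finiteness follows.

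The main obstacle I anticipate is the reverse direction's boundedness step: passing from ``finitely many quivers'' to ``finitely many polygons'' is not formal, because the map $P \mapsto Q_P$ is far from injective, so one genuinely needs an a priori bound on the polygons (e.g.\ via a mutation-invariant bound on normalized volume or on $|\cB|$ coming from singularity content, or an explicit enumeration in each of the four cases). The forward direction's obstacle is more bookkeeping than conceptual: one must systematically produce, for every quiver \emph{not} on the list that nonetheless has finite mutation class, an explicit infinite family of Fano polygons — the cleanest route is to show any multiple arrow between unfrozen vertices, or any of the forbidden underlying trees/graphs, yields unbounded exchange-matrix entries under mutation and hence unbounded polygon volume, reducing everything to the Markov-type phenomenon already visible in the $\PP^2$ example.
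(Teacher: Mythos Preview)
Your proposal contains one genuine error and one unnecessary complication.

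\textbf{Reverse direction.} Your boundedness worry is misplaced. The relevant surjection is not $\{\text{quivers}\}\to\{\text{polygons}\}$ but $\{\text{clusters of }\mathcal{C}_P\}\to\{\text{polygons mutation equivalent to }P\}$, which the paper records as the top arrow in the tower~\eqref{eq:tower}. Since $(A_1)^n$, $A_2$, $A_3$, $D_4$ are finite-type cluster algebras, there are finitely many clusters, hence finitely many polygons. No further bound on volume or on $|\cB|$ is needed.

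\textbf{Forward direction.} Here there is a real gap. You propose to eliminate $A_n$ $(n\ge 4)$, $D_n$ $(n\ge 5)$, and the $E$-types by ``producing infinite families'' of polygons. This cannot work: these are \emph{finite-type} cluster algebras, so by the very surjection above any Fano polygon with such a $Q_P$ would automatically be of finite mutation type. The correct mechanism is not dynamical but structural: the skew form on $\ZZ^m$ defining $Q_P$ factors through the two-dimensional lattice $M$ via $\rho$, so the exchange matrix has rank at most $2$. Concretely (Lemma~\ref{lem:transitivity}), non-adjacency in $Q_P$ is transitive, because two vertices are non-adjacent iff their images under $\rho$ are parallel. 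This immediately forbids $A_{n\ge 4}$, $D_{n\ge 5}$, the $E$-types, and all eleven exceptional finite-mutation quivers --- none of them is realizable as $Q_P$ for any Fano polygon at all.

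Similarly, your argument for excluding a Kronecker subquiver $Q_k$ $(k\ge 2)$ via ``unboundedly large exchange-matrix entries'' fails precisely on finite-mutation-type quivers: the Markov quiver has entries $(3,3,3)$ forever, yet the polygons grow. The paper proves this exclusion (Lemma~\ref{pro:no_kronecker}) by a direct geometric argument: project $P$ along $\rho^\star$ to $\QQ^2$, track the local indices $(h_1,h_2)$ of the two relevant edges under alternating mutation, and show they increase without bound when $k\ge 2$.

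With the rank-$2$ constraint and the no-Kronecker lemma in hand, what remains is to rule out the block-decomposable (surface) quivers of Theorem~\ref{thm:blocks} that are not already Dynkin. The paper does this by a block-by-block case analysis (blocks I--V), repeatedly using Lemma~\ref{lem:transitivity} to force small vertex sets and then mutating to exhibit a $Q_2$ subquiver. Your proposal does not address this step.
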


\begin{rem}
	The types referred to in Theorem~\ref{thm:finite_type} may also be referred to as type $I_n$, $II$, $III$, and $IV$ respectively; in analogy with Kodaira's monodromy matrices. The relationship between these matrices, log Calabi--Yau manifolds, and monodromy in certain integral affine manifolds is explored by Mandel in~\cite{Mandel14}.
\end{rem}

We first make two straightforward observations. First we note that the cluster algebra $\mathcal{C}_P$ induces a sequence of surjections:

\begin{equation}
\label{eq:tower}
\xymatrix{
	\{ \textrm{Clusters of $\mathcal{C}_P$} \} \ar[d] \\
	\{ \textrm{Polygons mutation equivalent to $P$} \} \ar[d] \\
	\{ \textrm{Quivers mutation equivalent to $Q_P$} \}.
}
\end{equation}

The first vertical arrow follows from the fact that algebraic mutations determine combinatorial mutations, the second from Lemma~\ref{lem:fix_cluster}. For example, using this tower of surjections in the case of a type $A_2$ cluster algebra, we can immediately state the following result.

\begin{pro}\label{pro:pentagon}
	If a Fano polygon $P$ has singularity content $(2,\mathcal{B})$ and the primitive inward-pointing normal vectors of the two edges corresponding to the unfrozen variables of $\cC_P$ form a basis of the lattice $M$, then the mutation-equivalence class of $P$ has at most five members.
\end{pro}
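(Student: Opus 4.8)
\emph{Proof proposal.} The plan is to reduce the statement to the pentagon periodicity of the $A_2$ cluster algebra and then descend along the tower of surjections \eqref{eq:tower}. Since $P$ has singularity content $(2,\mathcal{B})$, the set $I_{uf}$ of Definition~\ref{def:polygon_seed} has exactly two elements, so $Q_P$ has precisely two unfrozen vertices; write $e_1,e_2$ for the corresponding standard basis vectors of $\ZZ^m$ and $b_{12} := \{e_1,e_2\} = \rho(e_1)\wedge\rho(e_2)$ for the unique off-diagonal entry of the principal (unfrozen $\times$ unfrozen) part of the exchange matrix of $\mathcal{C}_P$.

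First I would observe that the hypothesis pins down $b_{12} = \pm 1$. Indeed $\rho(e_1),\rho(e_2)$ are the primitive inward-pointing normals of the two edges of $P$ attached to the unfrozen variables, and if these form a basis of $M$ then $\rho(e_1)\wedge\rho(e_2)$ is a generator of $\bigwedge^2 M\cong\ZZ$; in particular the two edges are distinct and $b_{12}\neq 0$. Hence, up to relabelling and sign, the principal part of the exchange matrix is the $A_2$ exchange matrix, whose Cartan companion is the $A_2$ Cartan matrix. By Theorem~\ref{thm:finite_cluster_algebras}, $\mathcal{C}_P$ is then a cluster algebra of finite type $A_2$, so its cluster exchange graph is a pentagon and it has exactly five clusters.

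Finally, the first vertical map of \eqref{eq:tower} is a surjection from the set of clusters of $\mathcal{C}_P$ onto the set of polygons mutation equivalent to $P$, so the latter set has at most five elements, as required.

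The only step needing a word of care is the bookkeeping around the frozen variables: $\mathcal{C}_P$ carries $|\mathcal{B}|$ frozen indices (those of $I\setminus I_{uf}$), which enter the exchange relations but not the combinatorics of clusters, so one must justify that ``$\mathcal{C}_P$ is of finite type $A_2$'' genuinely forces the count of clusters to be five — equivalently, that mutating only in the two unfrozen directions still closes up after five steps. This is standard (freezing variables does not change the exchange graph), but it is the point where the argument really uses that the count is governed by the principal part alone; I do not expect any serious obstacle beyond it.
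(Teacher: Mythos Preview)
Your proof is correct and follows essentially the same route as the paper: identify the unfrozen part of $Q_P$ as the $A_2$ quiver (the basis hypothesis forces $b_{12}=\rho(e_1)\wedge\rho(e_2)=\pm1$), invoke the pentagon of clusters for the $A_2$ cluster algebra, and descend along the surjection in \eqref{eq:tower}. The paper's version is terser and does not spell out the $b_{12}=\pm1$ computation or the frozen-variable bookkeeping you flag at the end, but the underlying argument is the same.
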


\begin{proof}
	The quiver associated to $P$ is precisely an orientation of the $A_2$ quiver. The cluster algebra $\mathcal{C}_P$ is well-known and its cluster exchange graph forms a pentagon. Note however that the \emph{quiver} mutation graph is trivial, as the $A_2$ quiver mutates only to itself.
\end{proof}

 Proposition~\ref{prop:seed_mutations} implies that the mutation class of $P$ has at most five elements. Note that we do not have a non-trivial lower bound: there is only one polygon in mutation equivalent to the polygon described in Example~\ref{ex:special_surface} up to $\GL(2,\ZZ)$ equivalence. Next observe that the sequence of surjections shown in \eqref{eq:tower} immediately implies that
\[
\textrm{$\mathcal{C}_P$ finite type} \Rightarrow \textrm{$P$ finite mutation type} \Rightarrow \textrm{$\mathcal{C}_P$ finite mutation type}.
\]

\begin{lem}
	\label{pro:no_kronecker}
	Given a Fano polygon $P$ of finite mutation type, $Q_P$ does not contain a Kronecker subquiver
	\[
	Q_k := 
	\{\xymatrix{
		v_1 \ar^k[r] & v_2
	}\},
	\]
	where $k>1$ is the number of arrows from $v_1$ to $v_2$.
\end{lem}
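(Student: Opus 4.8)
The plan is to assume that $Q_P$ contains a Kronecker subquiver $Q_k$ with $k\ge 2$ and to deduce that the mutation class of $P$ is infinite, contradicting the hypothesis. Let $v,v'$ denote its two vertices; by Definition~\ref{def:polygon_seed} they correspond to edges $E,E'$ of $P$ with primitive inward normals $w,w'\in M$, and the assumption is precisely that $|w\wedge w'|=k\ge 2$. I first reduce to the case that $v$ and $v'$ are both \emph{unfrozen}, so that $P$ may be mutated both at $E$ and at $E'$; the remaining configurations, in which the subquiver meets a residual basket class, I comment on at the end. Granting this, $P$ finite mutation type forces $\cC_P$ of finite mutation type, so by the tower of surjections \eqref{eq:tower}, together with the fact that a finite mutation class contains only finitely many polygons and hence only finitely many distinct edge-normals in total, it suffices to exhibit infinitely many pairwise distinct primitive vectors of $M$ occurring as edge-normals of polygons mutation equivalent to $P$. (Incidentally $Q_P$ always has at least three vertices: with singularity content $(n,\cB)$ one has $|Q_0|=n+|\cB|\ge\#\{\textrm{edges of }P\}\ge 3$, as no edge can have trivial $T$-content and trivial residual cone at once; hence by Theorem~\ref{thm:finite_mut_type} one in fact has $k=2$, but the computation below is uniform in $k$.)

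Next I compute the effect of mutating $P$ at $E$. Since $v$ is unfrozen this is a genuine Fano-polygon mutation --- the content of \cite{KNP15} recalled in \S\ref{sec:edge_mutations}, intertwined with quiver mutation by \cite[Proposition~$3.17$]{KNP15} --- realised on $P^\circ$ by the piecewise-linear map $T_{w,f}$ of Definition~\ref{def:mutation}, with $f\in N$ a primitive vector spanning the line through $E$. I orient $f$ so that $T_{w,f}$ genuinely shears the vertex of $P^\circ$ dual to $E'$ rather than fixing it; this is possible because $\langle w',f\rangle=\pm(w\wedge w')\ne 0$, and it changes the outcome only by a $\GL(2,\ZZ)$-transformation. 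The vertex dual to $E$, being a multiple of $w$, satisfies $\langle\,\cdot\,,f\rangle=0$ and is fixed, so the mutated polygon keeps an edge with normal $w$ at the same lattice height; the vertex dual to $E'$, a fixed positive multiple of $w'$, is carried to the same multiple of $w'-kw$. The vector $w'-kw$ is again primitive: a prime $p$ dividing it satisfies either $p\mid k$, whence $p\mid w'$, or $p\nmid k$, in which case reducing $w\wedge w'=\pm k$ modulo $p$ forces $p\mid k$ --- either way contradicting primitivity of $w'$. Hence $w'-kw$ is the primitive inward normal of an edge $\tilde E'$ of the mutated polygon, still at lattice height $h_{E'}$, with $|w\wedge(w'-kw)|=k$, and the quiver vertex carried by $\tilde E'$ remains $v'$, in particular unfrozen.

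Iterating, at each stage mutating at the edge that carries the most recently produced normal, I obtain a chain of polygons in the mutation class of $P$ whose edge-normals include vectors $a_0:=w$, $a_1:=w'$, $a_2,a_3,\dots$ with $a_{n+1}=a_{n-1}-k\,a_n$. The characteristic polynomial $\lambda^2+k\lambda-1$ has a root of absolute value $\tfrac{1}{2}\bigl(k+\sqrt{k^2+4}\,\bigr)>k\ge 2$, so writing $a_n=A\mu^n+B\nu^n$ with $|\mu|>1>|\nu|>0$, the linear independence of $a_0,a_1$ --- whose wedge is $\pm k\ne 0$ --- forces $A\ne 0$ and hence $\|a_n\|\to\infty$. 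Thus infinitely many pairwise distinct primitive normals occur among polygons mutation equivalent to $P$, contradicting finiteness of the mutation class and settling the unfrozen case.

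The step I expect to be the main obstacle is the sign bookkeeping of the second paragraph: one must check that the factor of each successive edge mutation can be oriented so that the mutation really moves the partner normal, and --- decisively for $k=2$, where the recursion's dominant root is only $1+\sqrt{2}$ and there is no slack to waste --- that this orientation, hence the minus sign in $a_{n+1}=a_{n-1}-k\,a_n$, stays consistent along the whole chain; carrying the fixed sign convention for $P^\circ$ through Definition~\ref{def:mutation} is what makes this work. A second point needing care is the excluded case in which the Kronecker subquiver meets a frozen (residual-basket) vertex, which requires a separate argument, presumably exploiting the invariance of singularity content under mutation.
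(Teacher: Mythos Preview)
Your approach differs substantially from the paper's. You aim to show that a distinguished pair of primitive inward normals follows a divergent linear recursion under alternating mutation; the paper instead projects $P$ to $\QQ^2$ via $\rho^\star$, tracks the \emph{local indices} $h_1,h_2$ of the edges attached to $v_1,v_2$, and reads off the inequality $h_1'\ge kh_2-h_1$ directly from the projected picture (Figure~\ref{fig:std_form}). For $k\ge 3$ this forces exponential growth of the heights; for $k=2$ it forces strict growth unless $h_1=h_2$, and that stationary subcase is dispatched by a short primitivity-and-interior argument.

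Your iteration has a genuine gap beyond the sign bookkeeping you flag. To continue after the first mutation you need two facts: that the sheared edge $\tilde E'$ carries a $T$-cone (so the next polygon mutation is legal), and that an edge with normal $w$ survives in the mutated polygon (so its dual vertex is there to be sheared in turn). The first you assert but do not prove: under Definition~\ref{def:mutate_seeds} the weight of $v$ becomes $-w$ and the weight of $v'$ becomes either $w'$ or $w'-kw$, so the quiver vertex $v'$ need not sit on your edge $\tilde E'$ at all. (Incidentally, choosing $f$ so that $w'/h'$ lies on the sheared side means $\langle w',f\rangle>0$, giving $(w'+kw)/h'$ rather than $(w'-kw)/h'$; harmless for divergence, but it makes your first step read $a_2=a_1+ka_0$, not $a_0-ka_1$.) The second fact fails outright when $E$ carries exactly one $T$-cone and no residual: the dual vertex $w/h$ is then absorbed into an edge of the mutated $P^\circ$ and is no longer available. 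The paper's height argument is immune to both problems, because it always mutates at whichever edge currently carries the unfrozen vertex $v_i$ and records only its height, never its normal; existence of that edge is automatic.

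The frozen-vertex case you single out is not treated explicitly in the paper's proof either: both arguments tacitly assume $v_1,v_2$ are unfrozen so that one may mutate at them.
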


\begin{rem}
	This result is expected from results on the corresponding cluster algebra. The Kronecker quiver defines a rank $2$ cluster algebra which is known not to be of finite type when $k > 1$. Given that $P$ is the Newton polygon of a superpotential which is itself a combination of cluster monomials, we expect the polygon $P$ to grow as we mutate.
\end{rem}
	
\begin{proof}[Proof of Lemma~\ref{pro:no_kronecker}]

	Assume there is a $Q_k$ subquiver of $Q_P$, with vertices $v_1$,~$v_2$ corresponding to edges $E_1$~$E_2$ of $P$. We define $\rho \colon \ZZ^2 \to M$ by mapping the standard basis to the primitive inward normal vectors $w_i$ to $E_i$ for $i \in \{1,2\}$. Let $P' \subset \QQ^2$ be the image of $P$ under $\rho^\star$. The resulting polygon in $\QQ^2$ is shown schematically in Figure~\ref{fig:std_form}.
	
	\begin{figure}
		\makebox[\textwidth][c]{\includegraphics[scale = 0.75]{./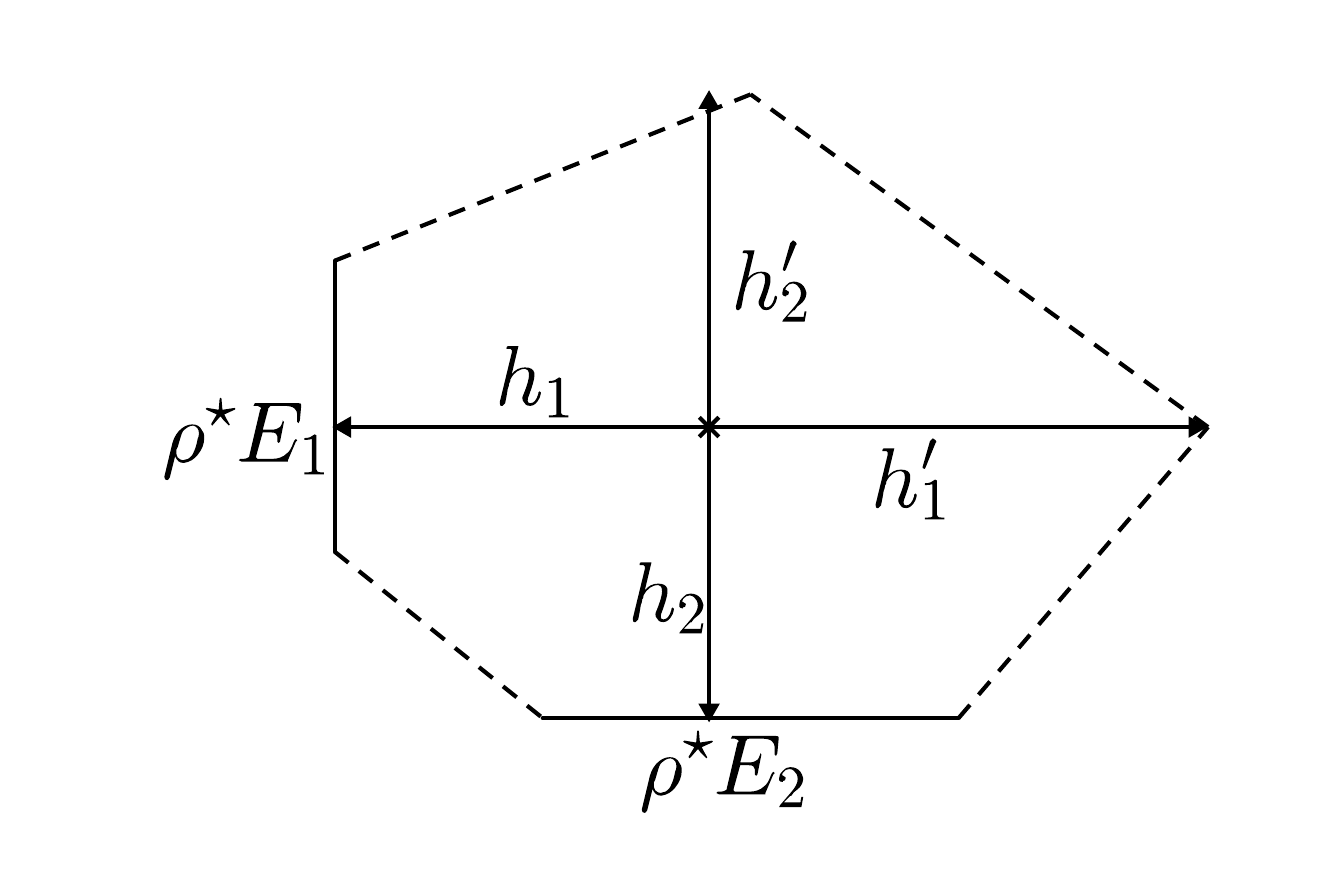}}%
		\caption{Schematic diagram of a polygon in standard form}
		\label{fig:std_form}
	\end{figure} 
	
	The \emph{local index} of each cone in $P$ is the integral height of the edge from the origin. Let $h_i$ denote the local indices of $E_i$ for $i \in \{1,2\}$. Note that, as $h_i = \langle e_i , \rho^\star v\rangle$ for any $v \in E_i$, $h_i$ is also the local index of $\rho^\star(E_i)$ in $P'$. Mutating at $v_1$ and $v_2$ we denote the new local indices,
	\[
	\xymatrix{
		(h_1,h_2') & (h_1,h_2) \ar[l] \ar[r] & (h_1',h_2).
	}
	\]
	We first show that $\rho^\star$ increases the lattice lengths of $E_i$ by a factor of $k := |w_1\wedge w_2|$ for each $i \in \{1,2\}$. Indeed, letting $\ell(E)$ denote the lattice length of an edge $E$, $v^i_1$ and $v^i_2$ denote the vertices of $E_i$, and $d_i := (v^i_1-v^i_2)/\ell(E_i)$ we have that:
	\begin{align*}
	\ell(\rho^\star(E_i)) &= \langle \ell(\rho^\star(E_i))e^\star_i,e_i  \rangle \\
	&=  \langle \rho^\star(v^i_1) - \rho^\star(v^i_2),e_i  \rangle\\
	&=  \langle v^i_1 -v^i_2,\rho(e_i)  \rangle \\
	&= \langle \ell(E_i)d_i,w_i  \rangle \\
	&= \pm\ell(E_i)(w_1\wedge w_2),
	\end{align*}
	where signs and orientations are chosen such that $\ell(E)$ is always positive. Studying Figure~\ref{fig:std_form}, we observe that:
	\begin{align*}
		h_1' \geq kh_2 - h_1 && h_2' \geq kh_1-h_2,
	\end{align*}
	Consider the case $k \geq 3$, and assume without loss of generality that $h_2 \geq h_1$. We have that $h_1' \geq 3h_2 - h_1 \geq 2h_2 \geq 2h_1$. Thus in this case the values in the pair $(h_1,h_2)$ grow (at least) exponentially with mutation, and in particular take infinitely many values.
	
	Next consider the case $k=2$. The inequalities above become,
	\begin{align*}
		h_1' \geq 2h_2 - h_1 && h_2' \geq 2h_1-h_2,
	\end{align*}
	\noindent and we are again free to assume that $h_2 \geq h_1$. Thus $h_1' \geq 2h_2-h_1 \geq h_1$, and if $h_2 > h_1$, $h_1' \geq 2h_2-h_1 > h_2$. Thus, assuming $h_1 \neq h_2$, one can generate an infinite increasing sequence of local indices. The only remaining case is if $h := h_1 = h_2 = h'_1 = h'_2$. To eliminate this possibility observe that, since $k=2$, the edges $\rho^\star(E_1)$,~$\rho^\star(E_2)$ must meet in a vertex with coordinates $(-h,-h)$ (indeed, assuming this does not hold, a mutation returns us to the previous case and one of the above inequalities is strict). Note that the sublattice $\rho^\star(N)$ is determined by the fact that $\rho^\star$ doubles the edge lengths of $E_1$ and $E_2$. The lattice vectors $(a,a)$ are in this sublattice for all $a \in \ZZ$. Thus, by primitivity of the vertices in $P$, $h=1$.  Since the origin is in the interior of $P$, mutating in one of $v_1$ or $v_2$ returns us to the previous case.
\end{proof}

\begin{rem}
	Proposition~\ref{pro:no_kronecker} implies all the quivers that we consider from now on are directed graphs. Hence we refer to vertices as \emph{adjacent} if they are adjacent in the underlying graph.
\end{rem}

As well as the non-existence of Kronecker quivers in $Q_P$ for finite mutation type polygons $P$, we use heavy use of a connectedness result for quivers $Q_P$ which follows immediately from the definition of $Q_P$ via determinants in the plane; or equivalently from the fact the exchange matrix has rank $2$.
	
\begin{lem}
	\label{lem:transitivity}
	Given a Fano polygon $P$ and vertices $v_1$,~$v_2$,~$v_3$ of $Q_P$ such that $v_i$ and $v_{i+1}$ are not adjacent for $i=1$,~$2$, then $v_1$ and $v_3$ are not adjacent.
\end{lem}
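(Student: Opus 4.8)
The plan is to work directly with the inward-pointing normal vectors, translating non-adjacency of vertices in $Q_P$ into a statement about determinants (wedge products) in the plane. Write $w_1, w_2, w_3 \in M$ for the primitive inward-pointing normals to the edges of $P$ corresponding to $v_1, v_2, v_3$. By the definition of $Q_P$ in Definition~\ref{def:polygon_seed}, the number of arrows between $v_i$ and $v_j$ is $\langle w_i, w_j\rangle := w_i \wedge w_j$ with respect to a fixed identification $\bigwedge^2 M \cong \ZZ$. Thus the hypothesis that $v_i$ and $v_{i+1}$ are not adjacent for $i = 1, 2$ translates into $w_1 \wedge w_2 = 0$ and $w_2 \wedge w_3 = 0$, and the conclusion we want is $w_1 \wedge w_3 = 0$.

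The key step is then purely a linear-algebra observation in the rank-two lattice $M_\QQ \cong \QQ^2$: if $w_1 \wedge w_2 = 0$ and $w_2 \wedge w_3 = 0$, then $w_1$, $w_2$, $w_3$ are pairwise parallel, hence $w_1 \wedge w_3 = 0$. The only subtlety is that this argument requires $w_2 \neq 0$ in order to conclude $w_1 \parallel w_2 \parallel w_3$ from the two vanishing conditions; but $w_2$ is a primitive lattice vector (it is the inward-pointing normal to an edge of a Fano polygon), so $w_2 \neq 0$ automatically. Concretely, $w_1 \wedge w_2 = 0$ forces $w_1 = \lambda w_2$ for some $\lambda \in \QQ$, and $w_2 \wedge w_3 = 0$ forces $w_3 = \mu w_2$ for some $\mu \in \QQ$, whence $w_1 \wedge w_3 = \lambda \mu\, (w_2 \wedge w_2) = 0$, so $v_1$ and $v_3$ are not adjacent.

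The main (and really the only) obstacle is a bookkeeping one: making sure that "not adjacent" is correctly identified with the vanishing of the relevant wedge product, including the case where two of the edges of $P$ might have the same inward normal direction (so that $w_i$ and $w_j$ are positive multiples of a common primitive vector even though $w_i = w_j$ as primitive vectors). In all such cases the wedge product still vanishes, so non-adjacency holds; there is no genuine difficulty. One should also note, as the remark preceding the lemma indicates, that this is equivalent to the statement that the exchange matrix $b_{ij} = w_i \wedge w_j$ has rank $\leq 2$: the three rows $w_1, w_2, w_3$ lie in the rank-two lattice $M$, and non-adjacency of consecutive pairs says $w_1, w_2$ and $w_2, w_3$ are each linearly dependent, which over a plane forces all three to be linearly dependent in pairs. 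I would present the determinant computation in a single short display and conclude.
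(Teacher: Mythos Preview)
Your proposal is correct and matches the paper's approach exactly: the paper states that the lemma follows immediately from the definition of $Q_P$ via determinants in the plane, or equivalently from the fact that the exchange matrix has rank~$2$, and gives no further argument. Your translation of non-adjacency into the vanishing of $w_i \wedge w_j$ and the one-line linear algebra (three vectors in a two-dimensional space with $w_2 \neq 0$ primitive) is precisely what is meant.
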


\begin{proof}[Proof of Theorem~\ref{thm:finite_type}]
	
	By Lemma~\ref{lem:transitivity}, if $Q_P$ is not connected, $Q_P \cong A_1^n$ for some $n$. Similarly, if $Q_P$ is of type $A$ or $D$, then it must be one of $A_2$,~$A_3$ or $D_4$. Thus we only need to show that there are is no Fano polygon $P$ of finite mutation type such that $\mathcal{C}_P$ is not of finite-type. However $\mathcal{C}_P$ is of finite mutation type, and we use the classification described in Theorems~\ref{thm:finite_mut_type} and~\ref{thm:blocks}, following \cite{FST07,FST09}.	In fact, using Lemma~\ref{lem:transitivity}, none of the eleven exceptional types can occur as $Q_P$ for a Fano polygon $P$. Hence we can restrict to quivers which admit a \emph{block decomposition} and work case-by-case.

	We claim that every quiver $Q_P$ associated to a Fano polygon $P$ which admits a block decomposition is either mutation equivalent to an orientation of a simply-laced Dynkin diagram or to a quiver which contains a subquiver $Q_k$ for $k>1$. We assume for contradiction that $Q_P$ is the quiver associated to a Fano polygon $P$ of finite-type which is not mutation equivalent to a simply laced Dynkin diagram.
	
	\proofsection{Block V}
	
	\begin{figure}
		\makebox[\textwidth][c]{\includegraphics[scale=1.5]{./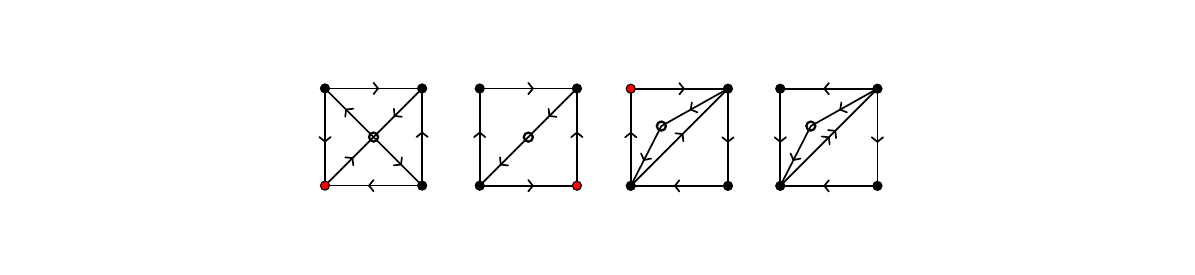}}
		\caption{Mutations of block V}
		\label{fig:block_V_mutations}
	\end{figure}
	
	First observe that, since only one vertex of the block V is an outlet, the V block quiver is a subquiver of any quiver which contains the V block in its decomposition. However this quiver mutates to a quiver with a $Q_2$ subquiver as shown in Figure~\ref{fig:block_V_mutations}. Therefore block V never appears in a decomposition of a quiver $Q_P$. For later use we shall fix the following intermediate quiver, $\text{V}'$, shown in Figure~\ref{fig:Intermediate_block}.
	\vspace{10mm}

	\begin{figure}
		\makebox[\textwidth][c]{\includegraphics[scale = 1.5]{./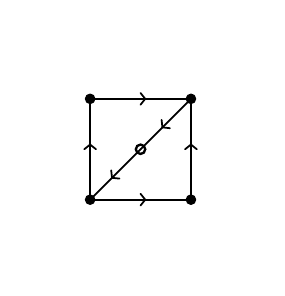}}%
		\caption{Quiver $\text{V}'$}
		\label{fig:Intermediate_block}
	\end{figure}
	
	\proofsection{Blocks IIIa and IIIb}
	
	Assume there is a type III block (a or b) connected to a quiver $Q'$ at a vertex $v$. If there is a vertex $v'$ of $Q'$ such that $v$ and $v'$ are not adjacent, the quiver violates Lemma~\ref{lem:transitivity}. In particular the vertex set of $Q'$ must be the vertex set of a single block. In particular, using the previous part, $Q'$ has at most four vertices. Case by case study shows that only the $A_3$ and $D_4$ types appear.
	
	\proofsection{Block IV}
	
	Consider the case of a decomposition only using type IV blocks. Note that the type IV block is itself of type D$4$. Consider attaching two type IV blocks. If the blocks are attached at a single outlet the resulting quiver contradicts Lemma~\ref{lem:transitivity}. In fact it is easy to see that it is impossible to add additional type IV blocks to meet this condition. If both pairs of outlets are matched there are two possible quivers depending on the relative orientations of the arrow between the outlets, one orientation produces a $Q_2$ subquiver automatically, the other produces a quiver containing the quiver $\text{V}'$ as a subquiver. Thus, for a type IV block to appear in a decomposition of $Q_P$ it must include a type I or II block.
	
	Now consider decompositions using type I and II blocks as well as type IV blocks. First note there must be exactly one IV block (assuming there is at least one). Indeed, if type IV blocks are not connected using both vertices, a non-outlet vertex of a IV block is not adjacent to some outlet, and some non-outlet vertex of a (different) IV block. However outlets and non-outlets of a type IV block are always adjacent, violating Lemma~\ref{lem:transitivity}.

	\begin{figure}[ht]
		\centering
		\subfigure[Attaching I blocks to a IV block]{%
			\includegraphics{./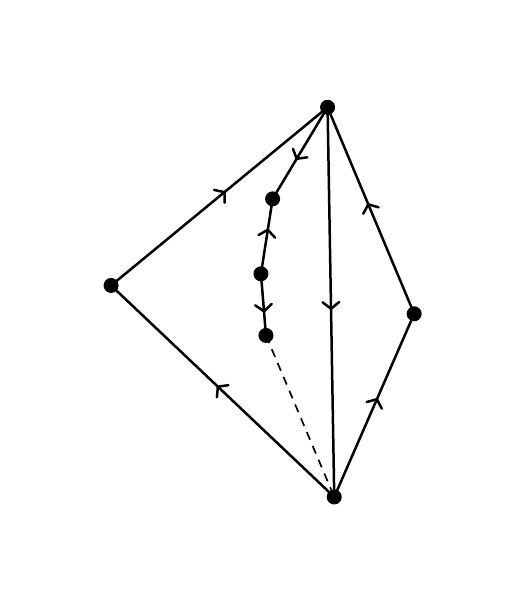}
		}
		\quad
		\subfigure[Attaching II blocks to a IV block]{%
			\includegraphics{./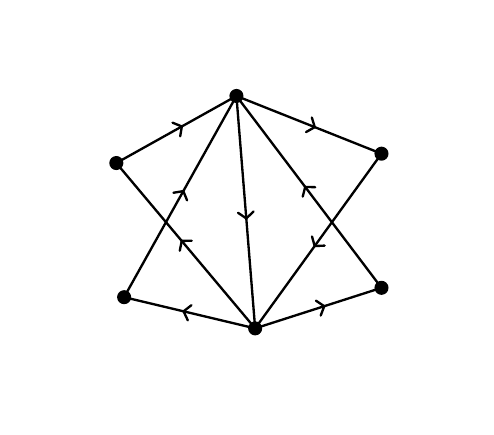}
		}
		\label{fig:IV_blocks}
	\end{figure}
	
	Thus we must attach I and II blocks to a single type IV block. By Lemma~\ref{lem:transitivity} the vertex set of the final quiver must be equal to the vertex set obtained by attaching a single block to each outlet of the IV block. Considering these cases in turn, we note first that attaching a type I block to cancel the arrow between the two outlets produces a quiver mutation equivalent to $D_4$ and therefore eliminated. For chains type I blocks of length two, if a $3$-cycle is produced, a mutation in the vertex between the type I blocks produces the $\text{V}'$ quiver. If not, the same mutation produces a $Q_2$ subquiver.
	
	Attaching a type II block along two outlets of the type IV block recovers the $\text{V}'$ or $Q_2$ subquiver cases we have already seen. Attaching type II blocks to a single outlet each we observe that every new vertex must be adjacent to both outlets of the IV block. Hence the only case without a $Q_2$ subquiver is shown on the right of Figure~\ref{fig:IV_blocks}, however this quiver mutates to one with a $Q_2$ subquiver. Attaching further type II blocks any quiver we obtain must contradict Lemma~\ref{lem:transitivity}.
	
	\proofsection{Blocks I and II}
	
	From what we have shown above, the block decomposition of $Q_P$ consists only of type I and type II blocks. Any connected quiver with a block decomposition into type I blocks is a path (with possibly changing orientations), which possibly closes up into a cycle. The only cases not violating Lemma~\ref{lem:transitivity} are mutation equivalent to orientations of simply laced Dynkin diagrams.

	\begin{figure}
			\includegraphics[scale=0.75]{./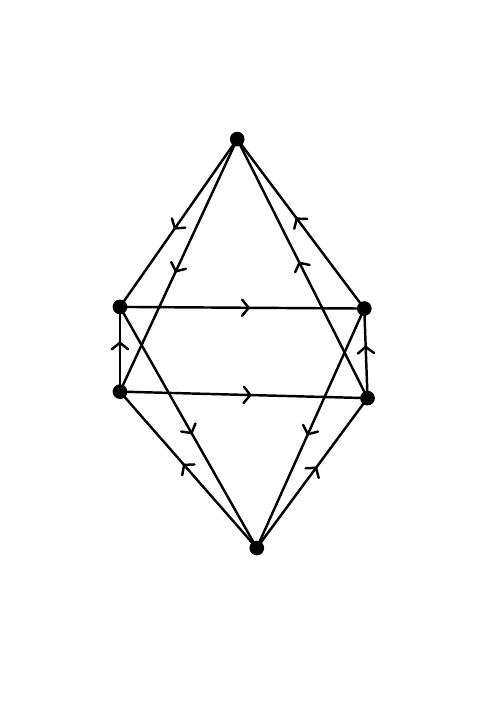}
			\caption{Octahedron of type II blocks.}
			\label{fig:Octahedral_quiver}
	\end{figure}

	For decompositions of $Q_P$ with type I and II blocks we divide the proof into cases indexed by the number of type II blocks. For a single type II block, we can attach a type I block to two outlets and in this way reduce to the type III case. Attaching each type I block to a type II block in at most one outlet, we use the fact that every new vertex must be adjacent to at least two of the vertices of the type II block. Thus we can obtain only two undirected graphs -- the underlying graph of a type IV block or an orientation of a tetrahedron, these cases can easily be eliminated. For example, there is no orientation of the tetrahedron making every cycle oriented; hence after a single mutation we obtain a quiver violating Proposition~\ref{pro:no_kronecker}.
	
	Consider the case of a pair of type II blocks. If these have disjoint vertex sets, each outlet of a type II block cannot be adjacent to \emph{two} of the outlets of the other type II block. Thus we must cancel the arrow between these two outlets with a type I block. However this creates a pair of $1$-valent non-outlet vertices which can be eliminated similarly to the type III case. At the other extreme, if we attach along all three outlets, we produce two easy cases. Attaching along a pair of outlets we generate either a $Q_2$ subquiver or a $4$-cycle. Considering the $4$-cycle with two outlets $v_1$ and $v_2$ (on non-adjacent corners) to meet the conditions of Lemma~\ref{lem:transitivity} any vertex adjacent to one of $v_1$ or $v_2$ must be adjacent to the other. Moreover, if the resulting quiver contains an arrow between $v_1$ and $v_2$, a mutation at one of the non-outlet vertices gives a $Q_2$ subquiver. Given a vertex $v$ adjacent to $v_1$ and $v_2$, if this defines a path between them, mutating at this node and a non-outlet in the four cycle produces a $Q_2$ subquiver. If $v$ does not lie on a path between $v_1$ and $v_2$ then mutating at both outlets produces a $Q_2$ subquiver.
	
	Attaching the type II blocks at a single outlet, the four arrows incident to this vertex are now fixed, so any new vertex must be adjacent to each of the remaining four outlets by Lemma~\ref{lem:transitivity}. However this cannot be achieved with type I blocks.

	Attaching more than two type II blocks together, we can eliminate the case where two are connected to form a 4-cycle as above. Since we can easily eliminate the case that two type II blocks meet in three outlets, we assume that each type II block meets every other in at most one outlet. Some pair of type II blocks must be attached in an outlet (otherwise we can argue as in the case of type II block separated by type I blocks). Thus, since every new vertex must be adjacent to all four outlets formed by attaching two type II blocks, all possible quivers can be represented as an octahedron with some orientation, see Figure~\ref{fig:Octahedral_quiver}.
	
	Considering an orientation of the octahedron; if any triangular face does not form a cycle we can mutate to form a $Q_2$ subquiver. Assuming every triangle is a cycle, and possibly mutating, the vertices adjacent to the `top' of the octahedron form a type V block subquiver. Following the same reasoning as for the type V block case (although note that the type V block is not part of a block decomposition here) these cases can be eliminated.
\end{proof}

\bibliographystyle{plain}
\bibliography{bibliography}
\end{document}